\newtheorem{thm}{Theorem}[section]
\newtheorem{lem}[thm]{Lemma}
\newtheorem{prop}[thm]{Proposition}
\newtheorem{cor}[thm]{Corollary}
\newtheorem{conj}[thm]{Conjecture}
\theoremstyle{remark}
\newtheorem{rmk}[thm]{Remark}
\theoremstyle{definition}
\newtheorem{defin}[thm]{Definition}
\newtheorem{definlem}[thm]{Definition-Lemma}
\newtheorem{hyp}[thm]{Hypothesis}
\newcommand{\ZZ}{\mathcal{Z}}
\newcommand{\CC}{\mathbb{C}} 
\newcommand{\PP}{\mathbb{P}}
\newcommand{\QQ}{\mathbb{Q}} 
\newcommand{\OO}{\mathcal{O}}
\newcommand{\LL}{\mathscr{L}}
\newcommand{\MM}{\overline{\mathcal{M}}}
\newcommand{\proj}{\operatorname{Proj}}
\begin{document}
\date{}
\nocite{*}

\title{\textbf{On the arithmetic of weighted complete intersections of low degree}}

\author {cristian minoccheri}
\address{Department of Mathematics, University of Michigan, Ann Arbor, MI 48109}
\email{minoc@umich.edu}

\begin{abstract}
A variety is rationally connected if two general points can be joined by a rational curve. A higher version of this notion is rational simple connectedness, which requires suitable spaces of rational curves through two points to be rationally connected themselves. We prove that smooth, complex, weighted complete intersections of low enough degree are rationally simply connected. This result has strong arithmetic implications for weighted complete intersections defined over the function field of a smooth, complex curve. Namely, it implies that these varieties satisfy weak approximation at all places, that $R$-equivalence of rational points is trivial, and that the Chow group of zero cycles of degree zero is zero.
\end{abstract}

\maketitle

\tableofcontents

\section{Introduction}

For a variety defined over a field $F$, a basic question is whether it has points defined over $F$, and, if so, what properties do these points satisfy. One such property that has been extensively studied with different techniques over various fields is the weak approximation principle: for any finite set of places of $F$ and points of $X$ over these places, is there an $F$-rational point of $X$ which approximates these points arbitrarily closely? If $F$ is the function field $K=\CC(B)$ of a smooth complex curve $B$, this property has a nice geometric interpretation: for a (regular) model $\pi: \mathscr{X} \to B$ of $X$, and for any choice of jets over finitely many points of $B$, is there a section of $\pi$ with these prescribed jets?

\

Rationally connected varieties -- varieties whose any two points can be connected by a rational curve -- are natural candidates for the weak approximation principle, since by \cite{GHS} any geometrically rationally connected variety over $K$ admits a $K$-rational point. In \cite{HT}, Hassett and Tschinkel proved that, in fact, if $X$ is geometrically rationally connected over the function field $K$ of a curve, the weak approximation principle holds at places of good reduction (i.e., at points of $B$ that admit smooth fibers in some model). It is then natural to wonder whether weak approximation holds at places of bad reduction too, and indeed there is an open conjecture (\cite[Conjecture 2]{HT}) that reads as follows:

\begin{conj}
A smooth geometrically rationally connected variety $X$ over $K$ satisfies weak approximation at all places.
\end{conj}

This conjecture has proven hard to tackle, even though by now many cases are known (see \cite{Has} for a survey of the progress in this direction). One way to attack it is to prove that $X$ is \em rationally simply connected\em. This notion is analogous to that of simple connectedness in topology (we will provide a precise definition in section $2$) and in a nutshell, it pertains the study of rational connectedness of suitable spaces of stable maps to $X$ of genus $0$. By a Theorem of Hassett (\cite[Theorem 4.7]{Has}), if $X$ is geometrically rationally simply connected, then $X$ satisfies weak approximation at all places. This notion has further arithmetic consequences, due to work of Pirutka (\cite{Pir}): if $X$ is geometrically rationally simply connected, there is only one class of Manin's $R$-equivalence, and the degree map $deg:CH_0(X) \to \mathbb{Z}$ on zero cycles is bijective.

Despite its many implications, the main problem with the notion of rational simple connectedness is that it is quite hard to check for concrete examples. For $X$ to be rationally simply connected, one usually needs the tangent bundle of $X$ to satisfy strong positivity properties. In particular, we say that $X$ is $2$-Fano if $c_1(X)$ and $ch_2(X)$ are both positive. In \cite{dJS06a}, de Jong and Starr proved that $2$-Fano complete intersections in projective space are rationally simply connected. While $2$-Fano varieties have only been classified for index at least $n-2$, it seems inevitable for the class of complete intersections in \em weighted projective spaces \em to come up. They are therefore a natural case to study, and in this paper we indeed prove that most $2$-Fano weighted complete intersections are rationally simply connected.

\

Let $\PP_{F}(1^3,e_3,...,e_n)$ denote the weighted projective space over a field $F$ with weights $1,1,1,e_3,...,e_n$ (where $e_3,...,e_n$ are any natural numbers). Our main result is the following:

\begin{thm}
\label{mainthm}
Let $X_{d_1,...,d_c} \subset \PP_{\CC}(1^3,e_3,...,e_n)$ be a smooth, $2$-Fano, weighted complete intersection of dimension at least $3$ which is not isomorphic to a linear space. Assume further that $d_1+...+d_c \leq e_3+...+e_n$. Then $X_{d_1,...,d_c}$ is rationally simply connected.
\end{thm}

As already mentioned, by \cite[Theorem 4.7]{Has} and \cite[Theorem 1.5]{Pir}, we have the following main application to varieties defined over the function field $K$ of a smooth complex curve: 

\begin{cor}
\label{maincor}
Let $X_{d_1,...,d_c} \subset \PP_{K}(1^3,e_3,...,e_n)$ be a smooth, $2$-Fano, weighted complete intersection of dimension at least $3$ which is not isomorphic to a linear space. Assume further that $d_1+...+d_c \leq e_3+...+e_n$. Then:

1) $X_{d_1,...,d_c}$ satisfies weak approximation at all places;

2) $X(K)/R=1$ (i.e., there is only one class of $R$-equivalence); and

3) $deg:CH_0(X_{d_1,...,d_c}) \to \mathbb{Z}$ is bijective.
\end{cor}

Theorem \ref{mainthm} proves rational simple connectedness for most $2$-Fano weighted complete intersections, and in infinitely many new cases. The condition requiring the first three weights to be $1$ shouldn't be restrictive: in all the explicit examples we have, for a weighted, $2$-Fano complete intersection to be smooth, at least $3$ weights have to be $1$ (in fact, often many more than $3$). One could expect the result to still hold if we replaced the bound $d_1+...+d_c \leq e_3+...+e_n$ with the sharp Fano bound $d_1+...+d_c \leq e_3+...+e_n+2$. On the other hand, our method requires the weighted complete intersection to have a positive dimensional family of lines through a general point; therefore the condition on the index is necessary.

The easiest case in which the Theorem applies is that of cyclic degree $r$ covers of $\PP^{n-1}$ branched along a hypersurface of degree $r \cdot e$. Namely, consider the case $e_3=...=e_{n-1}=1$, $e:=e_n \geq 1$, $r \geq 1$, $X_{re} \subset \PP_K(1,...,1,e)$ smooth weighted hypersurface of degree $re$. Then $X_{re}$ satisfies the properties of Corollary \ref{maincor} if $n \geq max\{4, re-e+3, r^2e^2-e^2\}$. In the case of double covers, i.e. $r=2$, and for $e \geq 2$, the condition becomes $n \geq 3e^2$.

\

In \cite{dJS06a}, a general strategy to prove rational simple connectedness is designed (in fact, the authors actually prove a \em stronger \em version of rational simple connectedness than the one we need in this paper for the arithmetic implications we mentioned above). We will follow such general framework. On the other hand, weighted projective spaces are both very close and very far from usual projective space in several respects; therefore, while many steps of the proof will adapt with little effort, several key steps will require substantially different proofs. The paper is organized as follows.

In section $2$, we collect basic facts about weighted projective spaces, spaces of stable maps, and rational simple connectedness that we will need in the following sections. We also prove in the weighted case the analogue of a technique that allows to extend a smooth complete intersection to a smooth complete intersection of higher dimension, of which the former is a linear section.

In section $3$, we prove that the generic geometric fiber of $ev_1: \MM_{0,1}(X,1) \to X$ is connected. The proof of this in \cite{dJS06a} relies on the very basic fact that there is only one line through two points; but this fails in the weighted case. Our proof consists in describing the fiber of the evaluation map as a closed subscheme of an appropriate weighted projective space, whose coordinates parametrize suitable morphisms from $\PP^1$ into the original weighted projective space (this is sometimes called the space of quasi-maps). We then make use of a connectedness result that is a generalized, iterated version of the classical connectedness Lemma of Enriques-Severi-Zariski.

In section $4$, we prove that the generic geometric fiber of $ev_2: \MM_{0,2}(X,2) \to X \times X$ is connected. Again, the method in \cite{dJS06a} does not immediately apply for essentially the same reason as above. We first find an ample divisor class $\lambda$ on the fiber, and relate it to other natural classes. Then we show the result for higher dimensional weighted complete intersections by means of a weighted version (due to B\u adescu) of the Fulton-Hansen Connectedness Theorem. Finally, we show how the original fibers are obtained as sections of the higher dimensional ones by divisors with class $\lambda$. In the process, we also show that the general fiber of $ev_2$ is uniruled by rational curves with $\lambda$-degree $1$.

In section $5$, we prove existence of some special ruled surfaces in $X$ called $1$-twisting surfaces. This will be a by-product of uniruledness of a general fiber of $ev_2$ proven in section $4$. From this, one can deduce rational simple connectedness from the case of conics, as in \cite{dJS06a}.

\

{\bf Acknowledgments:} I would like to thank my advisor, Jason Starr, for proposing the problem, for many invaluable suggestions, and for his constant support and encouragement. I would also like to thank Luigi Lombardi and David Stapleton for many useful conversations.

\section{Preliminaries}

We collect here some general definitions and results that are going to be used later. Let us first start with some precise definitions of the properties we are going to study.

\subsection{Arithmetic notions}

\

\textbf{Weak approximation over function fields}. Let $B$ be a smooth, connected curve over an algebraically closed field (say, $\CC$). Let $K$ be the function field of $B$, and $S \subset B$ a proper, closed subscheme. Let $X$ be a smooth $K-$scheme, and $\mathscr{X}$ a (regular) \em model \em for $X$, i.e., a proper morphism $\pi: \mathscr{X} \to B$ with regular total space $\mathscr{X}$ and with generic fiber $X$. Then $X$ satisfies \em weak approximation with respect to $B$ and $S$ \em if for one model $\mathscr{X}$ and every $B-$morphism $\sigma: S \to \mathscr{X}$, $\sigma$ can be extended to a $B-$morphism defined on an open subset $U \subset B$ that contains $S$. $X$ satisfies \em weak approximation with respect to $B$ \em (or, equivalently, \em over $K$\em) if for every proper, closed subscheme $S \subset B$, $X$ satisfies weak approximation with respect to $B$ and $S$.

Koll\'ar, Miyaoka and Mori showed in \cite{KMM} that if $X$ is rationally connected and a model admits a section, then this section enjoys good approximation properties. Years later, Graber, Harris and Starr proved in \cite{GHS} the existence of such sections. Finally, Hassett and Tschinkel proved in \cite{HT} that weak approximation holds at places of good reduction (i.e. at points $b \in B$ for which there is a model having a smooth fiber over $b$). They also conjectured that every rationally connected variety over a function field $K$ as above satisfies weak approximation. After that, several authors managed to prove weak approximation in many cases by studying explicitly singular fibers. On the other hand, by \cite[Theorem 4.7]{Has}, if $X$ is geometrically rationally simply connected, it satisfies weak approximation at all places, regardless of the singularities that a model can have.

\

\textbf{$R$-equivalence}. Given a projective variety $X$ over a field $K$, we say that two $K-$points $x,y \in X(K)$ are \em directly R-equivalent \em if there is a morphism $\PP^1_K \to X$ such that $x$ and $y$ belong to the image of $\PP^1_K(K)$. One obtains an equivalence relation (due to Manin) called \em $R$-equivalence\em, whose set of classes is denoted $X(K)/R$. In \cite{Pir}, Pirutka proves that if $X$ is a rationally simply connected variety over the function field $K$ of a smooth complex curve, then $X(K)/R=1$. Moreover, she proves that with the same setup, the Chow group of zero cycles of degree zero is zero.

\subsection{Weighted projective spaces and complete intersections}

We now review some useful properties of weighted projective spaces and weighted complete intersections. We refer the reader to \cite{Dol} and \cite{Mor} for proofs and further discussions.

\

Let $F$ be a field of characteristic $0$. The \em weighted projective space \em $\PP_F:=\PP_F(e_0,...,e_n)$ of dimension $n$ and positive weights $e_0,...,e_n$ can be defined as $\proj F[x_0,...,x_n]$, where $F[x_0,...,x_n]$ is the polynomial ring with variables $x_i$ of degree $e_i$ for $i=0,...,n.$

It is customary to write iterated weights as powers of the weight; for example, $\PP(1^3,2,3^2)$ will be used to denote $\PP(1,1,1,2,3,3)$.

In this paper, $F$ will be either the field of complex numbers (in which case the subscript in $\PP_F$ will be omitted), or the function field $K$ of a smooth, connected, complex curve. We can always assume that any $n$ weights are coprime. Weighted projective spaces behave in many ways like usual projective space, and in many other ways differently, as we will see.

\

The weighted projective space $\PP$ is an irreducible, normal, projective variety, with cyclic quotient singularities (unless all weights are equal to $1$). Let $V_k$ be the closed subset of $\PP$ defined by the ideal $< x_i \ | \ k \nmid e_i>$. Then the singular locus $Sing(\PP)$ of $\PP$ equals $\cup_{k > 1} V_k$, and $\PP^\circ:=\PP \setminus Sing(\PP)$ is called \em weak projective space\em.

The weighted projective space $\PP$ has natural coherent $\OO_\PP-$modules $\OO_{\PP}(a)$ for every integer $a$, associated via the $\proj$ construction to the modules $F[x_0,...x_n](a)$. In general, the sheaves $\OO_{\PP}(a)$ are neither invertible, nor ample, and they don't behave well under tensor product. However, if we define $\OO_{\PP^\circ}(a)$ as $\OO_{\PP}(a)|_{\PP^\circ}$, the sheaves $\OO_{\PP^\circ}(a)$ are invertible, they behave well with respect to tensoring, and they are ample when $a$ is positive.

\

While working with differentials on $\PP$ requires some extra care, everything works as we expect once we restrict to the weak projective space. In particular, we have the following generalization of Euler's sequence: $$0 \to \OO_{\PP^\circ} \to \oplus_{i=0}^n \OO_{\PP^\circ}(e_i) \to T_{\PP^\circ} \to 0.$$

We will now discuss weighted complete intersections, whose definition is the standard one. For $f_1,...,f_c$ homogeneous elements of $\proj \CC[x_0,...,x_n]$ of degrees $d_1,...,d_c$, that form a regular sequence, $\proj \CC[x_0,...,x_n]/(f_1,...,f_c)$ is a \em complete intersection \em in $\PP$ (also called \em weighted complete intersection\em) of degrees $d_1,...,d_c$ (and codimension $c$). Denote such a weighted complete intersection by $X_{d_1,...,d_c} \subset \PP$.

\

We will be interested exclusively in smooth weighted complete intersections. We say that a weighted complete intersection $X_{d_1,...,d_c} \subset \PP$ is smooth if it is a smooth variety (in the usual sense) \em and \em it is contained in $\PP^\circ$. This definition is standard (see \cite{Mor} for example), and it is necessary for smooth weighted complete intersections to share a few nice properties of complete intersections in usual projective space. What could happen is that a variety $X \subset \PP$ is smooth despite having nonempty intersection with $Sing(\PP)$. On the other hand, it can be proved that if a weighted complete intersection $X=X_{d_1,...,d_c} \subset \PP$ is a smooth variety (in the usual sense) \em and \em $codim_X(X \cap Sing(\PP)) \geq 2$, then $X \subset \PP^\circ$.

\

If $X:=X_{d_1,...,d_c} \subset \PP$ is a smooth weighted complete intersection, then the sheaf $\OO_X(1):=\OO_{\PP^\circ}(1)|_{X}$ is ample and invertible, and we have the following formula for the canonical sheaf: $$\omega_X \simeq \OO_X(\sum_{j=1}^c d_j - \sum_{i=0}^n e_i).$$

All degrees will be intended with respect to $\OO_X(1)$, unless otherwise specified.

\

With respect to cohomology, the weighted case is also as good as possible. In fact, we have the following result (see \cite[Appendix B, Theorem B.13]{Dim} and \cite[Appendix B, Theorem B.22]{Dim}):

\begin{thm}
\label{cohomology}
The rational cohomology algebra $H^*(\PP,\QQ)$ is a truncated polynomial algebra $\QQ[z]/(z^{n+1})$ generated by an element $z$ of degree $2$.

If $X$ is a weighted complete intersection in $\PP$, then the morphism $H^k(\PP,\QQ) \to H^k(X,\QQ)$ is an isomorphism for $k < dim(X)$, and it is injective for $k=dim(X)$.

\end{thm}

The following computation can be carried on like in usual projective space. We recall that a variety $X$ is called $2$-Fano if it is Fano and $ch_2(T_X)$ is positive; therefore it is useful to have an explicit formula for the Chern character.

\begin{lem}
\label{cherntwo}
Let $X := X_{d_1,..,d_c} \subset \PP(e_0,...,e_n)^{\circ}$ be a smooth weighted complete intersection. Then $$ch_2(T_X)= \frac{1}{2}(\sum e_i^2 -\sum d_j^2) c_1(\OO_X(1))^2.$$
\end{lem}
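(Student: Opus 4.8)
The plan is to compute the Chern character of $T_X$ via the two fundamental exact sequences available on the smooth variety $X$, which lies inside $\PP^\circ$ where $\OO_{\PP^\circ}(1)$ is an honest invertible sheaf. Write $h := c_1(\OO_{\PP^\circ}(1))|_X \in H^2(X,\QQ)$. Since the Chern character is additive on short exact sequences and multiplicative on tensor products, everything reduces to tracking the degree-$2$ graded piece.

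First I would restrict the generalized Euler sequence
$$0 \to \OO_{\PP^\circ} \to \oplus_{i=0}^n \OO_{\PP^\circ}(e_i) \to T_{\PP^\circ} \to 0$$
to $X$ and take Chern characters, obtaining $ch(T_{\PP^\circ}|_X) = \sum_{i=0}^n e^{e_i h} - 1$; in particular the degree-$2$ component is $ch_2(T_{\PP^\circ}|_X) = \tfrac{1}{2}(\sum_i e_i^2) h^2$.

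Next, because $X = X_{d_1,\dots,d_c}$ is cut out in $\PP^\circ$ by a regular sequence $f_1,\dots,f_c$ with $f_j$ of degree $d_j$, the normal bundle is $N_{X/\PP^\circ} \simeq \oplus_{j=1}^c \OO_X(d_j)$, so $ch(N_{X/\PP^\circ}) = \sum_{j=1}^c e^{d_j h}$ and hence $ch_2(N_{X/\PP^\circ}) = \tfrac{1}{2}(\sum_j d_j^2) h^2$. Feeding both computations into the tangent--normal sequence
$$0 \to T_X \to T_{\PP^\circ}|_X \to N_{X/\PP^\circ} \to 0$$
and using additivity of $ch$ gives $ch_2(T_X) = ch_2(T_{\PP^\circ}|_X) - ch_2(N_{X/\PP^\circ}) = \tfrac{1}{2}(\sum_i e_i^2 - \sum_j d_j^2) h^2$, which is the claim.

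The only real subtlety --- rather than a genuine obstacle --- is making sure all the intersection theory is carried out on $X$ itself, which is a smooth projective variety, so that the Chern characters are well defined and $\OO_X(1)$ is a genuine line bundle; the generalized Euler sequence and the identification of the normal bundle both hold precisely because $X \subset \PP^\circ$, so the potential singularities of $\PP$ play no role. One could alternatively phrase the identity directly in $H^4(\PP,\QQ)$ using the restriction isomorphism of Theorem 2.1, but this reformulation is not needed for the degree-$2$ Chern character computation.
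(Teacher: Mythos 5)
Your argument is correct and is essentially the paper's own proof: both use the generalized Euler sequence on $\PP^\circ$ together with the tangent--normal sequence for $X \subset \PP^\circ$, apply additivity of the Chern character, and read off the degree-$2$ graded piece. No gaps.
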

\begin{proof}
Euler's generalized sequence $$0 \to\OO_{\PP^{\circ}}  \to \oplus_{j=0}^n \OO_{\PP^{\circ}}(e_j) \to  T_{\PP^{\circ}} \to 0$$ gives the relation $$ch(T_{\PP^{\circ}})=\sum_{j=0}^n e^{c_1(\OO_{\PP^{\circ}}(e_j))} -1.$$
The normal exact sequence $$0 \to T_X \to T_{\PP^{\circ}}|_X \to \oplus_{i=1}^c \OO_{\PP^{\circ}}(d_i)|_X \to 0$$ gives the relation $$ch(T_X)=ch(T_{\PP^{\circ}}|_X) - \sum_{i=1}^c e^{d_ic_1(\OO_X(1))},$$ which together with the previous one gives $$ch(T_X)=\sum_{j=0}^n e^{e_jc_1(\OO_X(1))} -1 - \sum_{i=1}^c e^{d_ic_1(\OO_X(1))}.$$ Looking at the part in degree $2$, we get: $$ch_2(T_X)=\frac{1}{2}\sum e_i^2 c_1(\OO_X(1))^2- \frac{1}{2}\sum d_j^2 c_1(\OO_X(1))^2.$$
\end{proof}

We prove now an extendability result which is a key ingredient in the method of \cite{dJS06a}. The proof is different from the original, so that we can apply it to the weighted case.

\begin{prop}
\label{extend}
Let $X:=\ZZ(F_1,...,F_c) \subset \PP:=\PP(e_0,...,e_n)$ be the common zero locus of polynomials $F_1,...,F_c$. Assume that $X$ is smooth, and consider, for a fixed $s>0$, the natural inclusion $\PP \subset \Pi:=\PP(e_0,..,e_n,1^s)$. Then there exists $Y:=\ZZ(G_1,...,G_c) \subset \Pi$ common zero locus of polynomials $G_1,...,G_c$ such that $Y$ is smooth and $Y\cap \PP=X$.

In particular, if $X$ is a smooth weighted complete intersection, $Y$ is also a smooth weighted complete intersection.
\end{prop}
\begin{proof}
Let $R$ and $S$ denote the polynomial rings $\CC[ x_0 , ... , x_n]$ and $\CC[ x_0 , ... , x_n , y_1, ... , y_s ]$ respectively, with $deg(x_i) = e_i$ for every $i=0,...,n$, and $deg(y_1) = ... = deg(y_s) = 1$. Therefore $\PP=Proj(R)$ and $\Pi=Proj(S)$.

\

Denote by $\Pi^\circ$ the open complement of $\PP$ in $\Pi$. Then $\Pi^\circ$ is smooth: namely, for every point $p$ in $\Pi^\circ$, there exists at least one coordinate $y_j$ of $\Pi$ that is nonzero at $p$; therefore $p$ is a smooth point of $\Pi^\circ$.

\

Next we construct the correct parameter space for $c-$uples on $\Pi$ that extend $X$. Let $N$ be the kernel of the algebra homomorphism $S \to R$ that corresponds to the projection $\Pi \to \PP$ onto the first $n+1$ coordinates. Note that, if $S_k$ is the $k-$th graded part of $S$, then $F_i \in S_{e_i}$. For every $i=0,..,n$, denote by $V_i$ the smallest subspace of $S_{e_i}$ that contains $N \cap S_{e_i}$ and $F_i$. The parameter space we seek is the affine space $V := V_1 \times ... \times V_c$ whose points are ordered $c-$uples $( G_1 , ... , G_c )$.

\

Finally, we have to show that we can find a smooth $c-$uple $( G_1 , ... , G_c )$ in $V$. Consider the closed subscheme $E$ of $V \times \Pi^\circ$ parametrizing data $( ( G_1 , ... , G_c ) , p )$ with $G_i(p)=0$ for every $i=1,...,c$.  Since $y_j$ is nonzero at $p$,  $y_j^{e_i} \in V_i$ is also nonzero at $p$. Therefore, the conditions $G_i(p) = 0$ for $i = 1 , ... , c$ give $c$ linearly independent conditions on $V$.  Thus the projection $\pi_2 : E \to \Pi^\circ$ is a vector bundle over $\Pi^\circ$. This implies that $E$ is smooth, since $\Pi^\circ$ is smooth and $\pi_2$ is a smooth morphism.

By generic smoothness, the projection $\pi_1 : E \to V$ is smooth over a dense open subscheme of $V$.  Thus, for $( G_1 , ... , G_c )$ in $V$ general and $Y$ its common zero locus, the open subscheme $Y^\circ := Y \cap \Pi^\circ$ is smooth. Therefore, the only possible singular points of $Y$ are points in the intersection with the zero scheme $\ZZ( y_1 , ... , y_s )$ corresponding to $\PP$. This intersection equals $X$ by construction, and $X$ is smooth by hypothesis, which implies that $Y$ is smooth at every point of $X$. Therefore $Y$ is smooth both at every point of $Y^\circ$ and at every point of $X$, meaning that $Y$ is everywhere smooth.

\

Finally, assume that $X$ is a smooth weighted complete intersection. Then $X$ is an ample effective divisor of $Y \cap \{y_2=...=y_s=0\}$ (since it is cut by $y_1=0$), and therefore $Y \cap \{y_2=...=y_s=0\}$ is a weighted complete intersection by \cite[Corollary 3.8]{Mor}. Iterating this process, we have that $Y$ is a weighted complete intersection.
\end{proof}

For convenience, we collect here the conditions on weights of the projective space and degrees of the complete intersection that will be used throughout the paper:

\begin{hyp}[Main hypothesis]
Let $F$ be a field, and let $X_{d_1,...,d_c} \subset \PP_F(e_0,...,e_n)$ be a weighted complete intersection of degrees $d_1,...,d_c$. We say that $X$ satisfies the \em main hypothesis \em if the following conditions are satisfied:

(1) $X$ has dimension at least $3$, i.e. $c \leq n-3$;

(2) $e_0=e_1=e_2=1$;

(3) $e_3+...+e_n +3+c-n \leq d_1+...+d_c$; 

(4) $d_1+...+d_c \leq e_3+...+e_n$; and

(5) $d_1^2+...+d_c^2 \leq 3+e_3^2 +...+e_n^2$.
\end{hyp}

These inequalities are not as restrictive as it may look at first sight.

We should remark that if $e_3+...+e_n +3+c-n > d_1+...+d_c$, then $X \simeq \PP^{n-c}$ by a characterization of projective space of Cho, Miyaoka and Shepherd-Barron (see \cite{CMSB}). Therefore (3) amounts to asking that $X_{d_1,...,d_c}$ is not isomorphic to usual projective space, a case which is not relevant. On the other hand, this condition implies that there are no curves of degree $1$ through two general points of $X_{d_1,...,d_c}$.

Condition (2) puts a restriction on three of the weights. As already mentioned, this is also probably not restrictive. From the classifications we have of smooth, $2$-Fano weighted complete intersections of high index or low dimension, at least three weights are equal to $1$. In fact, smoothness alone for weighted complete intersections often requires many more weights being equal to $1$. (4) ensures that $X$ is Fano -- precisely, of index at least $3$ -- and furthermore covered by lines (i.e., irreducible, rational curves of degree $1$). Even though the bound is stronger than the sharp Fano bound (i.e., $d_1+...+d_c \leq e_3+...+e_n+2$), all instances of the technique we adopt require such condition on the index. On the other hand, once we require $X$ to be $2-$Fano, this subtlety seems irrelevant from the known examples we have. (5) gives exactly the $2$-Fano bound.

\

In conclusion, it seems to us that this is almost the best possible hypothesis to adopt the techniques in this paper. In a more concise and less technical version, it amounts to the following:

\begin{hyp}[Main hypothesis -- Equivalent formulation] \ We say that $X_{d_1,...,d_c} \subset \PP_F(1^3,e_3,...,e_n)$ satisfies the main hypothesis if it is a smooth, 2-Fano weighted complete intersection of degrees $d_1,...,d_c$, with index and dimension at least $3$, that is not isomorphic to a linear space.\end{hyp}

\subsection{Moduli of stable maps and Rational Simple Connectedness}

An important role in what follows will be played by the moduli space of rational stable maps $\MM_{0,m}(X,\beta)$. We recall here its basic properties, and refer the reader to \cite{FP} and \cite{CK} for further details.

\begin{defin}
Let $X$ be a smooth, projective, complex variety. Let $\beta \in CH_1(X)$ be the class of a curve. An \em $m-$pointed, genus $0$ stable map to $X$ \em is the datum $(C,p_1,...,p_m,f)$ of:

1) a projective, connected, reduced, at worst nodal curve $C$ of arithmetic genus $0$;

2) $m$ distinct smooth marked points $p_1,...,p_m$ on $C$;

3) a morphism $f: C \to X$ satisfying the following stability condition: every contracted component of $C$ via $f$ must contain at least $3$ distinguished points (i.e., nodes or marked points).
\end{defin}

\begin{thm}
Let $X$ be a smooth, projective, complex variety. Let $\beta \in CH_1(X)$ be the class of a curve. Then there exists a projective, coarse moduli scheme $\MM_{0,m}(X,\beta)$ over $\CC$ parametrizing isomorphism classes $[(C,p_1,...,p_m,f)]$ of $m-$pointed, genus $0$ stable maps to $X$ such that $f_*([C])=\beta$.
\end{thm}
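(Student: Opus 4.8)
The plan is to follow the construction of Fulton and Pandharipande (see \cite{FP}), reducing first to the case of projective space and then realizing the moduli space as a quotient of a suitable Hilbert scheme. First I would reduce to the case $X = \PP^r$. Fix an embedding $X \hookrightarrow \PP^r$ given by a very ample line bundle, and set $d := \deg \beta$ with respect to $\OO_{\PP^r}(1)$. An $m$-pointed, genus $g$ stable map to $X$ with class $\beta$ is the same datum as an $m$-pointed, genus $g$ stable map $(C, p_1, \ldots, p_m, f)$ to $\PP^r$ of degree $d$ whose image is contained in $X$ and whose pushforward is $\beta$; since these are closed conditions, the desired space is a union of connected components of a closed subscheme of $\MM_{g,m}(\PP^r, d)$. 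Thus it suffices to construct $\MM_{g,m}(\PP^r, d)$.

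Second, I would rigidify the problem using the ample line bundle intrinsically attached to a stable map. The stability condition of Definition 2.4 is equivalent to the ampleness of $\LL_f := \omega_C(p_1 + \cdots + p_m) \otimes f^*\OO_{\PP^r}(3)$, which has fixed degree $2g-2+m+3d$. For $n \gg 0$, $\LL_f^{\otimes n}$ is very ample with vanishing higher cohomology and a fixed Hilbert polynomial, so it embeds $C$ into a fixed $\PP^N$; combining this embedding with $f$ realizes $C$ inside $\PP^N \times \PP^r$ as a nodal curve of fixed Hilbert polynomial, the marked points being recorded as disjoint sections. These data are parametrized by a bounded, hence quasi-projective, locally closed subscheme $H$ of an appropriate (flag) Hilbert scheme, on which $G := PGL_{N+1}$ acts by change of basis of $H^0(C, \LL_f^{\otimes n})$. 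Two points of $H$ lie in the same $G$-orbit precisely when they define isomorphic stable maps, and the stabilizers are the automorphism groups of stable maps, which are finite.

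Third, I would form the coarse moduli space as the quotient $\MM_{g,m}(\PP^r, d) = H/G$. Because the stabilizers are finite, this quotient exists as a scheme, either via Geometric Invariant Theory with a suitable linearization (which simultaneously yields projectivity) or via the Keel--Mori theorem applied to the associated quotient stack. The substantial content is then to show that the resulting space is proper. Separatedness amounts to uniqueness of limits and completeness to their existence; both are verified through the valuative criterion, that is, through a stable reduction theorem for maps: given a family of stable maps over the generic point of the spectrum of a discrete valuation ring, one applies semistable reduction to the domain curve, spreads out the map over the central fiber, and then contracts the resulting unstable components to produce a unique stable limit. Projectivity follows from the GIT presentation.

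I expect the main obstacle to be exactly this valuative criterion of properness, i.e.\ the stable reduction for maps. Unlike boundedness, which is immediate once $\LL_f$ is seen to have fixed degree, the reduction step requires delicate control of how the domain curve may degenerate: after semistable reduction one must identify and contract the rational or elliptic tails on which the map and marked points fail to be stable, and one must check that the stabilized limit is independent of the chosen semistable model. Establishing that this produces a single, well-defined stable map—neither losing separatedness nor failing existence—is the technical heart of the argument and the point where the stability condition in Definition 2.4 is used most essentially.
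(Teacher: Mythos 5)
This statement (Theorem 2.6 in the paper) is quoted as background: the paper gives no proof at all and simply refers the reader to \cite{FP} and \cite{CK}, so there is no internal argument to compare yours against. Your outline is, in substance, the standard Fulton--Pandharipande construction contained in those references, and it is correct in its main lines: reduction to $\PP^r$ via a projective embedding, rigidification by the intrinsically ample line bundle $\omega_C(p_1+\cdots+p_m)\otimes f^*\OO_{\PP^r}(3)$, boundedness and the resulting locally closed subscheme of a Hilbert scheme with a $PGL_{N+1}$-action having finite stabilizers, and properness via stable reduction for maps. You also correctly identify the valuative criterion as the technical heart. One caveat: your claim that the quotient step can be done ``via Geometric Invariant Theory with a suitable linearization (which simultaneously yields projectivity)'' understates a real difficulty --- Fulton and Pandharipande do \emph{not} produce a GIT linearization for which the relevant locus is stable (that was only achieved much later, by Baldwin and Swinarski); instead they build the coarse space by gluing local quotients by finite groups and prove projectivity separately by a semipositivity/ampleness argument in the style of Koll\'ar. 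Likewise, Keel--Mori a priori yields an algebraic space rather than a scheme. So the quotient-and-projectivity step is not quite as routine as your sketch suggests, but the overall architecture of your argument is the right one and matches the cited sources.
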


In general, $\MM_{0,m}(X,\beta)$ is neither smooth nor irreducible, and it might have irreducible components of different dimensions. In any event, the following Theorem (see \cite{CK}, 7.1.4) describes the local structure of $\MM_{0,m}(X,\beta)$. Here, $f^* \Omega^1_X \to \Omega^1_C(\sum_{i=1}^m p_i)$ is a complex in degrees $-1$ and $0$, and $Ext^k$ for $k=1,2$ denotes the hyperext groups.

\begin{thm}

Let $[(C,p_1,...,p_m,f)] \in \MM_{0,m}(X,\beta)$ be a stable map. Then $\MM_{0,m}(X,\beta)$ is defined locally around $[(C,p_1,...,p_m,f)]$ by $$dim\ Ext^2(f^* \Omega^1_X \to \Omega^1_C(\sum_{i=1}^m p_i), \OO_C)$$ equations in a nonsingular scheme of dimension $$dim\ Ext^1(f^* \Omega^1_X \to \Omega^1_C(\sum_{i=1}^m p_i), \OO_C).$$ In particular, every irreducible component of $\MM_{0,m}(X,\beta)$ has dimension at least $$dim\ Ext^2(f^* \Omega^1_X \to \Omega^1_C(\sum_{i=1}^m p_i), \OO_C)-dim\ Ext^1(f^* \Omega^1_X \to \Omega^1_C(\sum_{i=1}^m p_i), \OO_C)=$$ $$=-K_X \cdot \beta + dim(X) +m -3.$$

\end{thm}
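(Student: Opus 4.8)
The statement is the standard local description of the deformation theory of stable maps, and the plan is to exhibit a tangent--obstruction theory for the moduli problem whose tangent space and obstruction space at $(C,p_1,\ldots,p_m,f)$ are, respectively, $Ext^1$ and $Ext^2$ of the two-term complex $E^{\bullet}=[f^*\Omega^1_X \to \Omega^1_C(\sum_{i=1}^m p_i)]$ placed in degrees $-1$ and $0$. Once this is in place, I would invoke the general principle that a deformation--obstruction theory with tangent space $T^1$ and obstruction space $T^2$ produces an \'etale-local model cut out by $dim\,T^2$ equations inside a nonsingular scheme of dimension $dim\,T^1$, giving immediately the lower bound $dim \geq dim\,T^1 - dim\,T^2$.

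First I would set up the deformation--obstruction theory. Infinitesimal deformations of the datum $(C,p_1,\ldots,p_m,f)$ come from two sources: deformations of the $m$-pointed nodal curve $(C,p_1,\ldots,p_m)$, controlled by $Ext^k(\Omega^1_C(\sum p_i),\OO_C)$, and deformations of the morphism $f$ with the source held fixed, controlled by $H^k(C,f^*T_X)=Ext^k(f^*\Omega^1_X,\OO_C)$ (here $f^*\Omega^1_X$ is locally free because $X$ is smooth). The hyperext groups of $E^{\bullet}$ are exactly the groups that glue these two deformation problems along the differential $df$, so that $Ext^1(E^{\bullet},\OO_C)$ parametrizes first-order deformations of the stable map and $Ext^2(E^{\bullet},\OO_C)$ receives the obstructions to lifting them order by order. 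The stability condition guarantees no infinitesimal automorphisms, i.e. $Ext^0(E^{\bullet},\OO_C)=0$.

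Next I would apply Schlessinger's criterion together with Artin approximation (equivalently, the construction of a miniversal deformation from a tangent--obstruction theory) to conclude that, \'etale-locally around $(C,p_1,\ldots,p_m,f)$, the space $\MM_{0,m}(X,\beta)$ is the zero locus of $dim\,Ext^2(E^{\bullet},\OO_C)$ functions on a nonsingular scheme of dimension $dim\,Ext^1(E^{\bullet},\OO_C)$. Since imposing $N$ equations lowers dimension by at most $N$, every irreducible component through the point has dimension at least $dim\,Ext^1(E^{\bullet},\OO_C)-dim\,Ext^2(E^{\bullet},\OO_C)$. Finally, to evaluate this difference I would use the long exact sequence relating the hyperext groups of $E^{\bullet}$ to $Ext^k(f^*\Omega^1_X,\OO_C)=H^k(C,f^*T_X)$ and to the deformation groups $Ext^k(\Omega^1_C(\sum p_i),\OO_C)$ of the pointed curve; taking alternating sums of dimensions and using $Ext^0(E^{\bullet},\OO_C)=0$ together with the vanishing of $Ext^{\geq 3}$ on the one-dimensional $C$, the difference equals $$\chi(C,f^*T_X)-(3-m),$$ where $3-m$ is the Euler characteristic of the deformation theory of the $m$-pointed rational curve. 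Riemann--Roch on the genus-zero curve gives $\chi(C,f^*T_X)=\deg(f^*T_X)+dim(X)\chi(\OO_C)=-K_X\cdot\beta+dim(X)$ since $\chi(\OO_C)=1$, so the difference is $-K_X\cdot\beta+dim(X)+m-3$, as claimed.

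The main obstacle is the first step: one must carefully justify that the hyperext groups of $E^{\bullet}$ genuinely compute the tangent and obstruction spaces of the map moduli problem, simultaneously deforming the nodal source curve, its marked points, and the morphism, and in particular handling the contributions of the nodes of $C$ in the gluing of the two deformation functors. The dimension estimate and the Riemann--Roch computation are then formal.
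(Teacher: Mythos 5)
The paper does not prove this statement at all: it is quoted verbatim from the literature (Cox--Katz, \emph{Mirror Symmetry and Algebraic Geometry}, 7.1.4), so there is no in-paper argument to compare against. Your sketch is the standard deformation-theoretic proof underlying that citation and is essentially correct: the hyperext groups of the two-term complex $[f^*\Omega^1_X \to \Omega^1_C(\sum p_i)]$ give the tangent and obstruction spaces, stability kills $Ext^0$, the general tangent--obstruction formalism yields the local cut-out description, and the long exact sequence plus Riemann--Roch on the arithmetic-genus-zero curve (using $\chi(\OO_C)=1$ and $\chi(RHom(\Omega^1_C(\sum p_i),\OO_C))=3-m$) gives $-K_X\cdot\beta+\dim(X)+m-3$. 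The only point you elide is that stable maps may have nontrivial \emph{finite} automorphism groups, so the local model of the coarse space is really a finite quotient of the locus you describe; this does not affect the dimension bound.
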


The smallest possible dimension $-K_X \cdot \beta + dim(X) +m -3$ is often referred to as the \enquote{expected dimension} of the moduli space. One also has evaluation morphisms $$ev_{p_i}: \MM_{0,m}(X,\beta) \to X$$ for $i=1,...,m$, defined as $$ev_{p_i}([(C,p_1,...,p_m,f)]) = f(p_i).$$ Similarly, one has a total evaluation morphism $$ev_m: \MM_{0,m}(X,\beta) \to X^m, \ \ \ ev_m([(C,p_1,...,p_m,f)]) = (f(p_1),...,f(p_m)).$$ The fibers of these evaluation morphisms also have an \enquote{expected dimension}, which equals the expected dimension of the domain minus the dimension of the target.
To prove rational simple connectedness, it will be crucial to understand the general fiber of some of these evaluation morphisms.  This task can be quite hard in general. However, we will only be interested in \enquote{minimal} pointed curves, which have been studied in \cite{dJS06a} and satisfy much nicer properties. This notion is related to the key concept of a \em free \em rational curve, meaning a morphism $f: \PP^1 \to X$ to a smooth variety $X$ such that $f^*(T_X)$ is generated by global sections. The following Theorem collects some of these properties (see \cite{dJS06a}, Lemmas 4.1, 5.1 and 5.3 for further reference):

\begin{thm}
\label{smoothfibers}
Let $X$ be a smooth, projective variety, and $\OO_X(1)$ an ample invertible sheaf. Let $\alpha$ be a curve class of $\OO_X(1)-$degree $1$.

1) Assume that $ev_1: \MM_{0,1}(X,\alpha) \to X$ is dominant. Then a fiber $\mathcal{F}_p$ of $ev_1$ over a general point $p$ is smooth, and every connected component has dimension equal to its expected dimension $-K_X \cdot \alpha -2$.

2) Assume that $ev_2: \MM_{0,2}(X,2\alpha) \to X \times X$ is dominant. Then a fiber $\mathcal{F}_{p,q}$ of $ev_2$ over a general point $(p,q)$ is smooth, and every connected component has dimension equal to its expected dimension $-K_X \cdot (2 \alpha) - dim(X) -1$.
\end{thm}
\begin{proof}
Since the proof of $(1)$ is an easier version of that of $(2)$, we are only going to prove the latter.

Since $p$ and $q$ are general points of $X$ -- in particular, they are distinct and there is no line in $X$ through them -- a point $[(C,p_1,p_2,f)]$ of $\mathcal{F}_{p,q}$ can only parametrize either a stable map of degree $1$ from $\PP^1$ to a conic, or a stable map of degree $1$ from a tree of two copies of $\PP^1$ with one marked point on each component to two lines meeting at one point. In either case, since every component of $C$ has degree $1$ over its image, $(C,p_1,p_2,f)$ is automorphism free. Also, since in either case the image of every component of $C$ passes through a general point of $X$, every component of $(C,p_1,p_2,f)$ is free.

By \cite[Theorem 2]{FP}, this implies that $\MM_{0,2}(X,2\alpha)$ is smooth of the expected dimension at every point of $\mathcal{F}_{p,q}$. Furthermore, if $\mathcal{U} \subset \MM_{0,2}(X,2\alpha)$ is the open subset parametrizing unions of free curves, by Generic Smoothness we have that $ev_2|_{\mathcal{U}}$ is smooth. On the other hand, $\mathcal{F}_{p,q}$ is contained in the fiber of $ev_2|_{\mathcal{U}}$ over $(p,q)$; thus $\mathcal{F}_{p,q}$ is smooth.
\end{proof}

As we mentioned, the properties of Corollary \ref{maincor} will be implied by rational simple connectedness of the weighted complete intersections we are considering. We recall here the definition, which tries to mimic to notion of simple connectedness in topology. Since the spaces of stable maps are (in general) poorly behaved, it is more convenient (and, fortunately, sufficient) to work with suitable \enquote{good} irreducible components of these moduli spaces:

\begin{defin}
\label{RSC}
A variety $X$ is (weakly) \em rationally simply connected \em if, for all $l \geq 2$, there is a canonically defined irreducible component $M_{l,2}$ of $\MM_{0,2}(X,l \alpha)$, with the property that $ev_2|_{M_{l,2}}: M_{l,2} \to X \times X$ is dominant, and the general fiber is rationally connected.
\end{defin}

We will prove these properties in the case $l=2$ directly. It would be hard (in general) to check the definition for infinitely many degrees $l$. However, provided the existence of certain special ruled surfaces, called $1$-twisting surfaces, the proof for $l > 2$ follows by induction (see \cite[Proof of Theorem 1.7]{dJS06a} or \cite[Proof of Theorem 7.3, Step 1]{DeL}). In section $4$, we prove that the general fiber of $ev_2: \MM_{0,2}(X,2\alpha) \to X^2$ is rationally connected (the base case of induction) and that it is uniruled, which allows us to construct these twisting surfaces in section$5$. In section $3$, we prove that the general fiber of $ev_1: \MM_{0,1}(X,\alpha) \to X$ is irreducible; this is the requirement to define the \enquote{good} irreducible components, since we can apply the following Lemma (see \cite[Lemma 3.5]{dJS06a}):

\begin{lem}
\label{higherdegreecomponents}
Let $M_{\alpha,0}$ be an irreducible component of $\MM_{0,0}(X,\alpha)$ whose general point parametrizes a smooth, free curve. Denote by $M_{\alpha,1}$ the unique irreducible component of $\MM_{0,1}(X,\alpha)$ dominating $M_{\alpha,0}$. 

Assume that the general fiber of the restriction $ev|_{M_{\alpha,1}}: M_{\alpha,1} \to X$ is geometrically irreducible. Then for every positive integer $l$ there is a unique irreducible component $M_{l \alpha,0}$ of $\MM_{0,0}(X,l \alpha)$ whose general point parametrizes a smooth, free curve. Denote by $M_{l \alpha,1}$ the unique irreducible component of $\MM_{0,1}(X,l \alpha)$ dominating $M_{l \alpha,0}$. Then the generic fiber of the restriction $ev|_{M_{l \alpha,1}}: M_{l \alpha,1} \to X$ is geometrically irreducible.
\end{lem}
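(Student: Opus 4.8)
The plan is to argue by induction on $l$, building curves of class $l\alpha$ by gluing together a free curve of class $(l-1)\alpha$ and a free curve of class $\alpha$ along a node, and then smoothing. The base case $l=1$ is the hypothesis. For the inductive step, suppose the statement holds for $l-1$, so that there is a unique component $M_{(l-1)\alpha,0}$ with free general member, a unique dominating component $M_{(l-1)\alpha,1}$, and $ev\colon M_{(l-1)\alpha,1}\to X$ has geometrically irreducible generic fibre. To produce $M_{l\alpha,0}$ I would consider the locus of stable maps whose domain is a chain $C'\cup C''$ with $C'$ free of class $(l-1)\alpha$ and $C''$ free of class $\alpha$ meeting at one node. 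Gluing two free curves transversally yields $f$ with $f^*T_X$ globally generated and $H^1(f^*T_X)=0$, so the chain is smoothable and its smoothings are again free; hence such chains lie in the closure of a single component of $\MM_{0,0}(X,l\alpha)$ parametrizing smooth free curves. Uniqueness follows because any smooth free curve of class $l\alpha$ degenerates, inside the moduli space, to such a chain (break off a tail of class $\alpha$), so every component with free general member contains this chain locus and therefore coincides with it; call it $M_{l\alpha,0}$, and let $M_{l\alpha,1}$ be the unique component of $\MM_{0,1}(X,l\alpha)$ dominating it.

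The heart of the argument is to propagate geometric irreducibility of the generic fibre of $ev$. Realize the generic chain as the image of a gluing morphism
$$ \MM_{0,2}(X,(l-1)\alpha)\times_X \MM_{0,1}(X,\alpha)\longrightarrow \MM_{0,1}(X,l\alpha), $$
where the fibre product is formed over $X$ using the second evaluation on the first factor and the evaluation on the second factor, and the surviving marked point $p$ sits on the class-$(l-1)\alpha$ component. Restricting to the relevant components $M_{(l-1)\alpha,2}$ (the universal curve over $M_{(l-1)\alpha,1}$) and $M_{\alpha,1}$, I would argue irreducibility of the fibre product as follows. The fibre of $ev_p\colon M_{(l-1)\alpha,2}\to X$ over general $x$ is the preimage, under the forgetful morphism $M_{(l-1)\alpha,2}\to M_{(l-1)\alpha,1}$, of the $ev$-fibre over $x$; the latter is geometrically irreducible by induction, and the morphism has irreducible (generically $\PP^1$) fibres, so this preimage $A$ is irreducible. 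Then $A\times_X M_{\alpha,1}$ is irreducible, because the projection to $A$ has, over a general point, a fibre equal to a general fibre of $ev\colon M_{\alpha,1}\to X$, which is geometrically irreducible by hypothesis; here the word \emph{geometrically} is exactly what guarantees irreducibility of the fibre product. One checks the needed dominance of $A\to X$ from the fact that free curves through $x$ sweep out $X$. Thus the chains through $x$ at $p$ form an irreducible family.

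Finally I would deduce that the generic fibre $F_x:=ev^{-1}(x)$ of $ev\colon M_{l\alpha,1}\to X$ is geometrically irreducible by showing it is smooth and connected. For smoothness, a free curve has $f^*T_X\cong\bigoplus\OO(a_i)$ with $a_i\geq 0$, so $H^1(\PP^1,f^*T_X(-p))=0$; this is precisely the obstruction to $ev$ being smooth at $[f,p]$, so $F_x$ is smooth at its general (free, irreducible-domain) points. For connectedness, the closure in $F_x$ of the chain family constructed above is a boundary divisor $\Delta_x$ which, being the $ev_p$-fibre of the irreducible family $A\times_X M_{\alpha,1}$, is irreducible hence connected; since every free curve through $x$ degenerates within $F_x$ to such a chain, $\Delta_x$ meets every connected component of $F_x$, forcing $F_x$ to be connected. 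A smooth connected scheme is irreducible, and the whole construction is defined over the base geometrically, so $F_x$ is geometrically irreducible, completing the induction.

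The main obstacle is the connectedness step: one must ensure that the irreducible boundary divisor $\Delta_x$ genuinely meets every connected component of $F_x$, equivalently that no complete subfamily of free, irreducible-domain curves through $x$ is disjoint from the boundary. This requires a careful breaking argument, degenerating an arbitrary free curve in $F_x$ to a chain while keeping the marked point over $x$; the unobstructedness ($H^1=0$) of free curves is what makes these degenerations exist and remain inside $M_{l\alpha,1}$. A secondary technical point is verifying the dominance hypotheses needed for the fibre-product irreducibility lemma, which again rests on free curves through a point covering $X$.
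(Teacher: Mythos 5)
First, a point of comparison: the paper does not actually prove this statement---it is quoted from de Jong and Starr (Lemma 3.5 of \cite{dJS06a}) and used as a black box---so there is no in-paper argument to measure yours against. Your overall architecture (induction on $l$, gluing a free $(l-1)\alpha$-curve to a free $\alpha$-curve and smoothing, and propagating irreducibility of the $ev$-fibres through the fibre product $\MM_{0,2}(X,(l-1)\alpha)\times_X \MM_{0,1}(X,\alpha)$, where the word \emph{geometrically} does the work) is the standard and correct skeleton. But as written the argument has a genuine gap, and you have located it yourself without closing it: both the uniqueness of $M_{l\alpha,0}$ and the connectedness of $F_x$ rest on the assertion that every smooth free curve of class $l\alpha$ (respectively, every one through $x$) degenerates, inside its own irreducible component (respectively inside $F_x$), to a chain with \emph{free} pieces of classes exactly $(l-1)\alpha$ and $\alpha$ lying in the distinguished components. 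You offer only ``break off a tail of class $\alpha$''. Freeness alone does not furnish enough deformations fixing two points for bend-and-break; and even where bend-and-break applies it gives no control over the resulting degeneration---the limit may have several components, multiple covers, or non-free pieces, and need not land in the particular irreducible chain locus you constructed. Since this assertion is precisely what makes the component canonical and the fibre connected, the proof is incomplete without it.

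A secondary defect is the final step. You prove smoothness of $F_x$ only at its general, free, irreducible-domain points, yet conclude with ``a smooth connected scheme is irreducible''; that implication needs smoothness (or at least normality) everywhere, since two generically smooth components can still meet along the boundary, which is exactly where the moduli space may be singular. Moreover the boundary of $F_x$ is larger than the single stratum you describe (chains with the marked point on the $\alpha$-piece, decompositions into more than two pieces, non-free or multiple-cover pieces), so irreducibility of your one stratum does not by itself show it meets every connected component of $F_x$. A cleaner way to finish, parallel to Proposition 3.1 of this paper, is to show that $ev|_{M_{l\alpha,1}}$ is smooth away from codimension $2$ and that $M_{l\alpha,1}$ is normal, and then conclude connectedness of the geometric generic fibre via purity and the Stein factorization.
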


Lemma \ref{higherdegreecomponents} allows us to define $M_{l \alpha,2}$ as the unique irreducible component of $\MM_{0,2}(X,l \alpha)$ dominating $M_{l \alpha,0}$. These components will be the ones of Definition \ref{RSC}. 

\

As a final remark, note that Theorem \ref{mainthm} deals with \em complex \em varieties (and, in fact, we will work over $\CC$ throughout the rest of this paper), whereas our main application (Corollary \ref{maincor}) is about varieties defined over the function field $K$ of a smooth complex curve $B$. With respect to part $(1)$ of Corollary \ref{maincor}, Hassett's result shows that, given a multisection over $B$ passing through prescribed points, rational simple connectedness implies the existence of a ruled surface over $B$ that contains such multisection. One can then prove weak approximation using the fact that it holds for ruled surfaces. With respect to parts $(2)$ and $(3)$ of Corollary \ref{maincor}, since $\overline{K}$ is an algebraically closed field of characteristic 0, by the Lefschetz principle we have an embedding $\overline{K} \subset \CC$. It can be proved that rational simple connectedness over any algebraic closure of $K$ implies a notion of rational simple connectedness over $K$ defined in the same way as Definition \ref{RSC} (\cite[Proposition 4.1]{Pir}). Therefore, once we know that $X_{\CC}$ is rationally simply connected, we also have that $X_K$ is rationally simply connected.

\section{The space of pointed lines}

The main goal of this section is to prove that the evaluation map  $$ev_1: \MM_{0,1}(X,\alpha) \to X$$ is dominant and it has irreducible generic fiber. As we mentioned, we need this to be able to apply Lemma \ref{higherdegreecomponents}, but we will also use this in section $4$ to study the geometry of $2$-pointed stable maps of degree $2$. 

\

Dominance of $ev_1$ follows from \cite[V, 4.11.2]{Kol}, or alternatively from the proof of Theorem \ref{pointedlines} below (in fact, within our degree range, the general fiber of $ev_1$ has even positive dimension). With respect to irreducibility, in \cite{dJS06a} it is proved by means of a geometric argument, that however relies on the properties of lines in projective space. Namely, since through $2$ points there is a unique line, the authors were able to conclude that if there were different components of the fiber, they would have to intersect, and in fact be the same (since the fiber is smooth). However, in the weighted case we might have more than one curve of degree $1$ through $2$ points; therefore proving irreducibility will require some extra work.

\

Since by Theorem \ref{smoothfibers} a general fiber of $ev_1$ is smooth, to prove irreducibility it is enough to prove connectedness. A fundamental ingredient will be the following connectedness result, which can be thought of as a generalization of the Enriques-Severi-Zariski Lemma.

\begin{prop}
\label{ESZ}
Let $X$ be a projective, normal, connected scheme over $\CC$ of dimension at least $2$.  Let $b$ be an integer.  Let $(L_i,s_i,Y_i)_{i=1,...,b}$ be a sequence of triples, where $L_i$ is an ample line bundle on $X$, $Y_0=X$, $s_i \in H^0(Y_{i-1},L_i|_{Y_{i-1}})$, and $Y_i \subset Y_{i-1}$ is the zero locus of $s_i$. Then, if $b$ is less than $dim(X)$, each scheme $Y_1, . . . , Y_b$ is connected.
\end{prop}
\begin{proof}
Observe that, for $b=1$, the statement is the classical Lemma of Enriques-Severi-Zariski. The argument below will prove the Proposition by reducing it to this case.

\

Note first that, since connectedness is not affected by the scheme structure, it suffices to prove the result for $L_i^n$ and $s_i^n$, for some positive integer $n$. This allows us to assume that $L_i$ is very ample, and that $s_i$ is the restriction to $Y_{i-1}$ of a global section on $X$ of $L_i$. Namely, by applying Serre's Vanishing to the $n-$th power of the ideal sheaf of $Y_{i-1}$ (with $n$ big enough), we get that $s_i^n$ is the restriction to $Y_{i-1}$ of a section $t_i$ of $L_i^n$.

\

Now if the schemes $Y_i$ were normal, we could simply apply iteratively the Lemma of Enriques-Severi-Zariski, and we would be done. The trick is to reduce to the case of general sections.

\

Let $V$ denote the vector space $H^0(X,L_1) \oplus ... \oplus H^0(X,L_b)$, so that $\PP(V)$ is the parameter space for $b-$uples $(t_1,...,t_b)$ of global sections $t_i$ of $L_i$ up to common scaling.  Denote by $Z \subset X \times \PP(V)$ the closed subscheme parameterizing pairs $(p,(t_1,...,t_b))$ such that every $t_i$ is zero at $p$. Every $L_i$ is very ample; thus, for every $p \in X$ fixed, the conditions $t_i(p)=0$ for every $i=1,...,b$ correspond to $b$ linearly independent conditions on $\PP(V)$. Therefore the projection $\pi_1: Z \to X$ is a projective bundle. In particular, $Z$ is a projective, connected and normal scheme, and hence irreducible too.

\

We need to show that every fiber of $\pi_2: Z \to \PP(V)$ is connected. Since $Z$ is irreducible, it suffices to prove that the generic fiber of $\pi_2$ is connected.  For $(t_1,...,t_b)$ general, the schemes $Y_i$ are normal by a Bertini type Theorem (\cite[Theorem 7]{Sei}): after embedding $X$ in a projective space via $L_1$, a general hyperplane section is normal by \cite[Theorem 7]{Sei}; therefore $Y_1$ is normal.  By iterating this process (embedding $Y_1$ via $L_2$ and so on) we have that $Y_1,...,Y_b$ are all normal. Therefore the fiber over $(t_1,...,t_b)$ is connected since we can apply the Lemma of Enriques-Severi-Zariski iteratively, starting from $b=1$.
\end{proof}

\

Let us now fix some notation, and the hypothesis required for the main result of this section.

{\bf Notation.} Let $\PP:=\PP(1^3,e_3,...,e_n)$ be a weighted projective space, and let $d_1,...,d_c$ be natural numbers such that $d_1+...+d_c \leq e_3+...+e_n$. Let $W_j:=H^0(\PP,\OO_{\PP}(d_j))$ denote the vector space of degree $d_j$ weighted polynomials, and let $T:=W_1 \oplus ... \oplus W_c$  be the affine space that parametrizes weighted $c-$uples of polynomials of degrees $d_1,...,d_c$. There is a dense open subscheme $\mathcal{U} \subset T$ parametrizing $c-$uples $(F_1,...,F_c)$ whose common zero locus $X_{d_1,..d_c}$ is smooth, contained in $\PP^\circ$ and of codimension $c$. Namely, for $X_{d_1,..d_c}$ to be contained in $\PP^\circ$ corresponds to being disjoint from the closed subscheme $Sing(\PP)$; if $X_{d_1,..d_c}$ is contained in $\PP^\circ$, being smooth is equivalent to being quasismooth, which is an open condition; finally, for $X_{d_1,..d_c}$ to have smallest possible dimension (i.e., codimension $c$) it is also an open condition by upper semicontinuity of fiber dimension. 

\

The following Lemma allows us to restrict to general $c-$uples in $\mathcal{U}$.

\begin{lem}
If the general fiber of $ev_1$ is connected for $X_{d_1,..d_c}$ corresponding to a \em general \em $c-$uple of $\mathcal{U}$, then it is connected for $X_{d_1,..d_c}$ corresponding to \em any \em $c-$uple of $\mathcal{U}$.
\end{lem}
\begin{proof}
Consider the incidence correspondence $$I_1 \subset \PP \times \mathcal{U}$$ consisting of data $(p, (F_1,...,F_c))$ such that $F_j(p)=0$ for $j=1,...,c$ (note that $I_1$ is automatically contained in $\PP^\circ \times \mathcal{U}$). Since $\mathcal{U}$ is smooth and the fibers of the projection $I_1 \to \mathcal{U}$ are the common zero loci of $F_1,...,F_c$ for $(F_1,...,F_c) \in \mathcal{U}$ -- hence smooth -- we have that $I_1$ is smooth.

Consider further the moduli space $\MM_{0,1}(\PP,1)$ of curves $\gamma: \PP^1 \to \PP$ of degree $1$ with $1$ marked point (up to automorphisms), and the incidence correspondence $$I_2 \subset I_1 \times \MM_{0,1}(\PP,1)$$ consisting of data $((p, (F_1,...,F_c)), [\gamma])$ such that: 

1) $\gamma$ maps the marked point to $p$, and

2) $\gamma^*(F_j)=0$ for every $j=1,...,c$ (i.e., the curve is contained in the common zero locus of $F_1,...,F_c$).

(Again, note that $I_2$ is automatically contained in $\PP^\circ \times \mathcal{U} \times \MM_{0,1}(\PP^\circ,1)$.)

\

By construction, the projection $I_2 \to I_1$ is proper.
By upper semicontinuity of the fiber dimension, there is a dense open subscheme $\mathcal{V}_1 \subset I_1$ such that the fibers of the projection $I_2 \to I_1$ over $\mathcal{V}_1$ are equidimensional, with dimension equal to their expected dimension (which is the minimum possibile dimension). Denote by $\mathcal{V}_2 \subset I_2$  the preimage of $\mathcal{V}_1$ under the morphism $I_2 \to I_1$. Then $\mathcal{V}_2 \to \mathcal{V}_1$ is still proper and, by \cite[I.2.17]{Kol}, it is also flat. Therefore, by \cite[Cor. 15.5.4]{EGA}, the number of connected components of the fibers of $\mathcal{V}_2 \to \mathcal{V}_1$ is lower semicontinuous. Since the general fiber of $\mathcal{V}_2 \to \mathcal{V}_1$ is connected by hypothesis, this implies that every fiber is connected.

\

To conclude the proof of the Lemma, we only need to show that the morphism $\mathcal{V}_1 \to \mathcal{U}$ is surjective. Note that there is a dense open subscheme $\mathcal{W}_1 \subset I_1$, contained in $\mathcal{V}_1$, such that the fibers of the projection $I_2 \to I_1$ over $\mathcal{W}_1$ are smooth. Now for \em every \em $(F_1,...,F_c) \in \mathcal{U}$, and for a \em general \em point $p$ in its common zero locus $X_{d_1,..d_c}$, the fiber of $I_2 \to I_1$ over $(p, (F_1,...,F_c))$ is smooth by Theorem \ref{smoothfibers}. This means that the morphism $\mathcal{W}_1 \to \mathcal{U}$ is surjective, and, a fortiori, that the morphism $\mathcal{V}_1 \to \mathcal{U}$ is surjective too.
\end{proof}

We can now prove the main result of this section. The strategy is to consider a birational model of the fiber of the evaluation map, given by a closed subscheme of an appropriate weighted projective space, and then apply Proposition \ref{ESZ}.

\begin{thm}
\label{pointedlines}
Let $X_{d_1,...,d_c}$ be the common zero locus of a general $c-$uple $(F_1,...,F_c) \in \mathcal{U}$. Then the general fiber of $ev_1$ is connected.
\end{thm}
\begin{proof}
Let us denote $X_{d_1,..d_c}$ by $X$ for convenience. Let $q$ denote the point  $[ 1 : 0 : ... : 0 ]$. Since $X$ is general, $X$ intersects the complement of the common zero locus of $x_0, x_1 , x_2$. Therefore, given a general point $x$ of $X$, there exists a change of coordinates of $\PP$ that sends $x$ to $q$.

\

Consider the parameter space $Mor_1(\PP^1,\PP^\circ; \bullet \mapsto q)$ for morphisms of degree $1$ from $\PP^1$ to $\PP^\circ$ that map the marked point to $q$. For every $u \in Mor_1(\PP^1,\PP^\circ; \bullet \mapsto q)$, we can assume -- after composing with an automorphism of $\PP^1$ -- that the marked point mapped to $q$ is $[1:0]$ and that the point $[0:1]$ is the unique point that maps to the hyperplane $\{x_0=0\}$. Then the morphism can be written as $$u([s:t]) =  [s : t u_1 ( s , t ) : ... : t u_n ( s , t )],$$ and the only automorphisms of this stable map are of the form $\phi: [s:t] \mapsto [c_1s:c_2t]$ for some $c_1,c_2 \neq 0$, or equivalently $\phi ([s:t]) = [s:ct]$ for some $c \neq 0$.

Every $u_r(s,t)$ is a homogeneous polynomial of degree $e_r - 1$ of the form $$u_r(s,t) = u_{r,1} s^{e_r-1} + ... + u_{r,e_r} t^{e_r-1},$$ with coefficients $\{u_{r,k}\}$, where $r = 1, ... , n$ and $k = 1 , ... , e_r$.

Composition with the automorphism $\phi([s:t]) = [s:ct]$ above corresponds to multiplying every coefficient $u_{r,k}$ of $u_r(s,t)$ by $c^k$.  Therefore every \em stable map \em (rather than morphism) $u$ as above determines a unique point in a new weighted projective space, $$\mathbb{M} := Proj_\CC [ \{u_{r,k}\} ],$$ where $r \in \{1, ... , n\}$, $k \in \{ 1 , ... , e_r\}$, and $deg(u_{r,k}) = k$.

Thus we can identify the fiber over $q$ of the evaluation morphism $ev_1: \MM_{0,1}(\PP^\circ,\alpha) \to \PP^\circ$ with a dense open subscheme $\mathbb{M}^\circ$ of $\mathbb{M}$.

\

For every weighted polynomial $F$ on $\PP$ that is homogeneous of degree $d$, the polynomial $$(F \circ u)(s,t)=F ( s , t u_1 ( s , t ) , ... , t u_n ( s , t ) )$$ is a polynomial of degree $d$ in $s$ and $t$ of the form $$b_{d,0}^F(\{u_{r,k}\}) s^d + ... + b_{0,d}^F(\{u_{r,k}\}) t^d,$$ with coefficients $b_{d-l,l}^F (\{u_{r,k}\})$ that are polynomials of degree $l$ in the variables $\{u_{r,k}\}$. Note in particular that $b_{d,0}^F$ is constant, and that $F$ vanishes at the point $q$ if and only if $b_{d,0}^F=0$.

\

We can now describe the fiber of the evaluation map as a subset of $\mathbb{M}^\circ$. Let $X$ be defined by polynomials $F_1 , ... , F_c$ on $\PP$ of degrees $d_1 , ... , d_c$. Then the common zero locus of $F_1 , ... , F_c$ is contained in $\PP^\circ$ and it contains $q$. Thus we can describe the fiber of the evaluation map as the intersection of $\mathbb{M}^\circ$ with the common zero locus $\mathbb{M}(F_1,...,F_c)$ on $\mathbb{M}$ of the polynomials $b_{d_j - l , l}^{F_j}$ defined above, for $j = 1 , ... , c$ and for $l = 1 , ... , d_i$.  We can consider these polynomials as sections of ample lines bundles on $\mathbb{M}$; then, by Proposition \ref{ESZ}, $\mathbb{M}(F_1,...,F_c)$ is connected.

\

Since the fiber of the evaluation map is proper, the intersection of $\mathbb{M}(F_1,...,F_c)$ with $\mathbb{M}^\circ$ is proper. This implies that every irreducible component of $\mathbb{M}(F_1,...,F_c)$ is either entirely contained in $\mathbb{M}^\circ$ or it is entirely contained in its closed complement. But since $\mathbb{M}(F_1,...,F_c)$ is connected, $\mathbb{M}(F_1,...,F_c)$ is either entirely contained in $\mathbb{M}^\circ$ or it is entirely contained in its closed complement, which forces $\mathbb{M}(F_1,...,F_c)$ to be contained in $\mathbb{M}^\circ$. Therefore the fiber of the evaluation map equals $\mathbb{M}(F_1,...,F_c)$, which is connected.
\end{proof}

\begin{rmk}
It follows from the proof of Theorem \ref{pointedlines} that $ev_1$ is dominant and that its fibers have positive expected dimension. Namely, $dim(\mathbb{M})=e_1+...+e_n-1$, and every polynomial $F$ of degree $d$ imposes $d$ conditions. Therefore the expected dimension of the fiber is $e_1+...+e_n -1 - (d_1+...+d_c)$, and this number is positive if and only if $d_1+...+d_c \leq e_3+...+e_n$ (which is our hypothesis).
\end{rmk}

\section{The space of 2-pointed conics}

Throughout this section, $X$ will denote a smooth weighted complete intersection $X_{d_1,...,d_c}$ in $\PP_{\CC}(e_0,...,e_n)$ that satisfies the main hypothesis, and $\alpha$ is a curve class of degree $1$. We are going to study the general fiber of $$ev_2: \MM_{0,2}(X,2\alpha) \to X^2.$$ We will show that it is nonempty, irreducible, and uniruled by suitable rational curves. Uniruledness will be used to produce 1-twisting surfaces in section $5$.

\

We will first consider some useful divisor classes on the fiber, and relate them to an ample divisor class $\lambda$. Once again, while in the standard projective case $\lambda$ has some explicit geometric interpretation, that doesn't apply to the weighted case. For this reason, we will define it in a more intrinsic way (which is analogous to the one in \cite{dJS06a} in the case of usual projective space).

\

The setup is the following. Let $M_{p_1,p_2}:=ev_2^{-1}(p_1,p_2)$ be the fiber of $ev_2$ over a point $(p_1,p_2) \in X \times X$, such that $p_1$ and $p_2$ are distinct and not contained in a line in $X$.  Note that in particular -- by inequality $(3)$ in the main hypothesis -- for $(p_1,p_2)$ general there is no line contained in $X$ through $p_1$ and $p_2$. Then we have morphisms:

\begin{center}
\begin{tikzcd}
\mathscr{C} \arrow{r}{g} \arrow{d}{\pi}
&X \subset \PP^\circ \subset \PP \\
M_{p_1,p_2} 
\end{tikzcd}
\end{center}
where $\mathscr{C}$ is the universal family over $M_{p_1,p_2}$, with sections $s_1,s_2:M_{p_1,p_2} \to \mathscr{C}$ corresponding to the marked points. Let $S_i$  be the divisor associated to the zero locus of $s_i$ for $i=1,2$. The line bundles $\OO_{\mathscr{C}}(S_1)$, $\OO_{\mathscr{C}}(S_2)$ and $g^*\OO_{\PP^\circ}(1)$ give divisor classes $c_1(\OO_{\mathscr{C}}(S_1))$, $c_1(\OO_{\mathscr{C}}(S_2))$ and $c_1(g^*\OO_{\PP^\circ}(1))$ in $CH^1(\mathscr{C})$. In $CH^1(M_{p_1,p_2})$, there is a divisor class $\Delta_{1,1}$ corresponding to stable maps with reducible domain.  Furthermore, since $\pi$ is proper, there is a class obtained by pushing forward $c_1(g^*(\OO_{\PP^\circ}(1)))^2 \in CH^2(\mathscr{C})$ via $\pi$. We are now going to define one more class $\lambda \in CH^1(M_{p_1,p_2})$, and establish relations among these classes.

\begin{definlem}
\label{lambdadef}
Assume there is no line through $p_1$ and $p_2$. Then there is a divisor class $\lambda$ on $M_{p_1,p_2}$ that is ample and that satisfies the relation $$\pi_*g^*c_1(\OO_{\PP^\circ}(1))^2=2 \lambda.$$ If furthermore $p_1$ and $p_2$ are general, we have relations $$\Delta_{1,1}=2 \lambda,$$ and $$c_1(T_{M_{p_1,p_2}}) = (\sum e_i^2 -\sum d_j^2 + 2) \lambda.$$
\end{definlem}

\begin{proof}
Let us compare $\OO_{\mathscr{C}}(S_1+S_2)$ and $g^*\OO_{\PP^\circ}(1)$. For a stable map $[(C,p_1,p_2,f)] \in M_{p_1,p_2}$, the domain $C$ is either $\PP^1$ or a union of two copies of $\PP^1$ connected at a node. In the first case, the restriction of $g^*\OO_{\PP^\circ}(1)$ to $(C,p_1,p_2,f)$ is an effective line bundle of degree $2$; in the second case, the restriction of $g^*\OO_{\PP^\circ}(1)$ to $(C,p_1,p_2,f)$ is an effective line bundle obtained by glueing two effective line bundles of degree $1$ on each copy of $\PP^1$. For a stable map $[(C,p,q,f)] \in M_{p_1,p_2}$, the restriction of $\OO_{\mathscr{C}}(S_1+S_2)$ to $(C,p_1,p_2,f)$ has the same description. Therefore $\OO_{\mathscr{C}}(S_1+S_2)$ and $g^*\OO_{\PP^\circ}(1)$ are isomorphic on each irreducible component of each fiber of $\pi$. Thus, by the Semicontinuity Theorem, they are isomorphic up to the pullback of some line bundle $\LL$ on $M_{p_1,p_2}$: $$\OO_{\mathscr{C}}(S_1+S_2) \otimes \pi^* \LL \simeq  g^*\OO_{\PP^\circ}(1).$$

Note that $s_i^*\pi^* \LL \simeq \LL$ since $\pi \circ s_i$ is the identity, and that $s_i^* g^*\OO_{\PP^\circ}(1)$ is trivial since $g \circ s_i$ is the constant map to the point $p_i$. Furthermore, for any divisor $D$ on $\mathscr{C}$, we have that $$s_i^*\OO_{\mathscr{C}}(D) \simeq \pi_* (\OO_{\mathscr{C}}(D)|_{S_i}) \simeq \pi_* \OO_{\mathscr{C}}(S_i \cdot D).$$

Therefore, applying $s_i^*$ to each side of $\OO_{\mathscr{C}}(S_1+S_2) \otimes \pi^* \LL \simeq  g^*\OO_{\PP^\circ}(1)$, we have: $$s_i^* \OO_{\mathscr{C}}(S_i) \simeq \LL^\vee,$$ for $i=1,2$, or equivalently $$\OO_{\mathscr{C}}(\pi_*(S_i \cdot S_i)) \simeq \LL^\vee.$$

We define $\lambda:=c_1(\LL)$. Abusing the notation, $S_i$ will be also used to denote $c_1(\OO_{\mathscr{C}}(S_i))$ for $i=1,2$. Using again the relation $\OO_{\mathscr{C}}(S_1+S_2) \otimes \pi^* \LL \simeq  g^*\OO_{\PP^\circ}(1)$, we have that $$c_1(g^*\OO_{\PP^\circ}(1))^2=(S_1+S_2+\pi^*\lambda)^2=(S_1)^2 +(S_2)^2 +2S_1\cdot \pi^*\lambda +2S_2 \cdot \pi^*\lambda.$$ Since, by the projection formula, $\pi_*(S_i \cdot \pi^* \lambda)=\pi_*(S_i) \cdot \lambda$, and since $\pi_*(S_i) \cdot \lambda= \lambda$ (because $\pi|_{S_i}$ is an isomorphism), we then have:
$$c_1(\pi_*g^*\OO_{\PP^\circ}(1))^2 = \pi_*(S_1)^2 +\pi_*(S_2)^2 +2\pi_*(S_1)\cdot \lambda +2\pi_*(S_2)\cdot \lambda= -\lambda-\lambda+2\lambda+2\lambda=2\lambda.$$ By \cite[Lemma 5.8]{dJS06b}, we have $$\Delta_{1,1}=-\pi_*(S_1)^2 -\pi_*(S_2)^2 =\lambda +\lambda= 2\lambda.$$ The following formula for the canonical class of the fiber is a special case of \cite[Theorem 1.1]{dJS06b} (see \cite[Lemma 6.5]{dJS06a}): $$c_1(T_{M_{p_1,p_2}})=\pi_*g^*(ch_2(T_X)+c_1(\OO_{\PP^\circ}(1))^2).$$ Therefore, combining it with Lemma \ref{cherntwo}, we get $$c_1(T_{M_{p_1,p_2}})=(\frac{1}{2}(\sum e_i^2 -\sum d_j^2) + 1) \cdot 2 \lambda=(\sum e_i^2 -\sum d_j^2 + 2) \lambda.$$

To prove that $\lambda$ is ample, we can proceed as follows. Since $\OO_{X}(1)$ is ample, $\OO_{X}(b)$ is very ample for some positive integer $b$. Let us fix such $b$. Then $\OO_{X}(b)$ determines an embedding of $X$ into some projective space $\PP^N$. By functoriality of Kontsevich spaces, we get an embedding $\MM_{0,2}(X,2\alpha) \hookrightarrow \MM_{0,2}(\PP^N,2\alpha)$. On $\MM_{0,2}(\PP^N,2\alpha)$ there is a divisor class $\mathcal{H}$, corresponding to stable maps whose image intersects a fixed codimension $2$ linear subspace of $\PP^N$. If we write down the morphisms associated to the universal family over $\MM_{0,2}(\PP^N,2\alpha)$:

\begin{center}
\begin{tikzcd}
\mathscr{U} \arrow{r}{G} \arrow{d}{\Pi}
&\PP^N \\
\MM_{0,2}(\PP^N,2\alpha)
\end{tikzcd}
\end{center}
we have that $\mathcal{H}=\Pi_*G^*(c_1(\OO_{\PP^N}(1)))^2$.

It is known that the restriction of $\mathcal{H}$ to $\MM_{0,2}(X,2\alpha)$ is ample away from the locus of multiple covers of lines. Since there is no line contained in $X$ through $p_1$ and $p_2$, the restriction of $\mathcal{H}$ to $M_{p_1,p_2} \subset \MM_{0,2}(X,2\alpha)$ is ample. Furthermore, the restriction of $\mathcal{H}$ to $M_{p_1,p_2}$ has divisor class $\pi_*g^*c_1(\OO_{X}(b))^2$, which is equal to $2b^2\lambda$. Therefore $\lambda$ is ample.
\end{proof}

\

Now we can use $\lambda$ as an aid to study the geometry of fibers of $ev_2$. Before that, we need to relate $\lambda$ to the tangency divisor $\mathcal{T}_{W,p_1}$, which we now define.

\

Recall the setup of Proposition \ref{extend}: let $Y \subset \PP(e_0,..,e_n,1^s)$ be a smooth weighted complete intersection that extends the smooth weighted complete intersection $X \subset \PP(e_0,...,e_n)$, meaning that $X=Y \cap \{y_1=...=y_s=0\}$, where $y_1,...,y_s$ are the extra coordinates of weight $1$. Let $M_Y$ be the fiber of $ev_2: \MM_{0,2}(Y,2\alpha) \to Y^2$ over a general point $(p_1,p_2) \in X^2$. Let $W \subset \PP(e_0,..,e_n,1^s)$ be the zero locus of one such coordinate $y_i$ for some fixed $i=1,...,s$. Then $W$ passes through $p_1$, and $W \cap Y$ is a smooth codimension $1$ closed subscheme of $Y$, so that $T_{p_1}(Y \cap W) \subset T_{p_1}Y$ is a codimension $1$ linear subspace.  We define $\mathcal{T}_{W,p_1}$ as the Cartier divisor in $M_Y$ corresponding to stable maps $\{[f: C \to Y;t_1,t_2 \in C]\ |\ f(t_1)=p_1,\ f(t_2)=p_2,\ im(df_{t_1}) \subset T_{p_1}(W \cap Y)\}$. Note that $$s_1^*(\Omega_\pi)|_{[f: C \to Y;t_1,t_2 \in C]} \simeq Hom(T_{t_1}C, \CC),$$ and there is a global section $dg_{s_1}$ of $s_1^*(\Omega_\pi)$ defined over a point $[f: C \to Y;t_1,t_2 \in C, f(t_1)=p_1, f(t_2)=p_2]$ by the morphism $$T_{t_1}C \xrightarrow{df_{t_1}} T_{p_1}Y \to \frac{T_{p_1}Y}{T_{p_1}(W \cap Y)} \simeq \CC.$$ Therefore the divisor $\mathcal{T}_{W,p_1}$ can be thought of as the divisor of zeroes of $dg_{s_1}$. Since $s_1$ is a section, its image $S_1$ is contained in the smooth locus $\mathscr{C}^{sm}$ of $\mathscr{C}$. Therefore, the relative cotangent sequence associated to $S_1 \hookrightarrow \mathscr{C}^{sm} \to M_{p_1,p_2}$ is the short exact sequence: $$0 \to \OO_{\mathscr{C}}(-S_1)|_{S_1} \to \Omega_{\pi}|_{S_1} \to \Omega_{S_1/M_Y} \to 0.$$ Note that $\OO_{\mathscr{C}}(-S_1)|_{S_1} \simeq s_1^*(\OO_{\mathscr{C}}(-S_1))$ and $ \Omega_{\pi}|_{S_1} \simeq s_1^*\Omega_\pi$. Since $S_1 \to M_Y$ is an isomorphism, $\Omega_{S_1/M_Y} = 0$. Therefore we have that the divisor class of $\mathcal{T}_{W,p_1}$ is $c_1(s_1^*\Omega_\pi)=-\pi_*(S_1 \cdot S_1)$.

Now note that since $(p_1,p_2)$ is a general point of $X^2$, there is no line in $X$ that contains $p_1$ and $p_2$. This also implies that there is no line in $Y$ passing through $p_1$ and $p_2$; namely since each coordinate $y_i$ vanishes at $p_1$ and $p_2$, if there was such a line it would be contained in the common zero locus $\{y_1=...=y_s=0\}$, and therefore in $X$. The fact that there is no line in $Y$ that contains $p_1$ and $p_2$ is enough to define the divisor class $\lambda$ on $M_Y$ as in Definition-Lemma \ref{lambdadef}, which by construction equals $-\pi_*(S_1 \cdot S_1)$, and to prove it is ample. Therefore, tying the ends up together, the class of $\mathcal{T}_{W,p_1}$ equals $\lambda$, and it is ample.

\

We can now prove that a general fiber of $ev_2$ is non-empty and irreducible.

\begin{prop}
\label{irreducibleconics}
The general fiber of $ev_2: \MM_{0,2}(X,2\alpha) \to X^2$ is non-empty and irreducible.
\end{prop}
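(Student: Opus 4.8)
The plan is to reduce the statement to connectedness and then run the mechanism of Section~3 once more. By Proposition~4.1 the morphism $ev_2$ is dominant, so by Theorem~2.7 the fiber $M_{p_1,p_2}=ev_2^{-1}(p_1,p_2)$ over a general point is smooth and equidimensional of the expected dimension $-K_X\cdot(2\alpha)-\dim X-1$, which the main hypothesis forces to be positive; moreover by Definition-Lemma~4.3 and hypothesis $(5)$ we have $-K_{M_{p_1,p_2}}=(\sum e_i^2-\sum d_j^2+2)\lambda$ with $\lambda$ ample, so each connected component is Fano (hence rationally connected, but this alone does not connect distinct components). Thus every irreducible component of $M_{p_1,p_2}$ is a connected component, and the Proposition becomes the assertion that $M_{p_1,p_2}$ is connected. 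I would first record that over a general $(p_1,p_2)$ this full fiber coincides with the fiber of the canonical irreducible component $M_{2\alpha,2}$: the only maps of class $2\alpha$ through two general points are smooth conics and their degenerations into broken conics, since by hypothesis $(3)$ (as already used in Definition-Lemma~4.3) no line of $X$ passes through $p_1$ and $p_2$, so multiple covers of lines cannot contribute.

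With this reduction in hand, I would apply Proposition~3.1 to the restriction $h:=ev_2|_{M_{2\alpha,2}}\colon M_{2\alpha,2}\to X^2$. The target $X^2$ is smooth, and it is algebraically simply connected because it is a product of rationally connected varieties (each algebraically simply connected by \cite{Deb}, Corollary 4.18, and $\pi_1^{\mathrm{et}}$ is multiplicative for products over $\CC$). Granting that $M_{2\alpha,2}$ is normal and that the locus where $h$ is not smooth has codimension at least $2$, Proposition~3.1 yields that the geometric generic fiber of $h$ is connected; combined with the smoothness from Theorem~2.7 this gives irreducibility of $M_{p_1,p_2}$. As in Section~3, normality would follow once the smooth locus of $h$ has complement of codimension $\geq 2$, since then $M_{2\alpha,2}$ is regular in codimension $1$ and, having the expected dimension, is a local complete intersection.

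The hard part, exactly as in the degree-$1$ case, is the codimension estimate for the non-smooth locus, which I would obtain by letting $X$ vary and working in the incidence correspondence of two-pointed conics in the family of all such weighted complete intersections, parallel to the proof of Proposition~3.2. The local criterion is that $h$ is smooth at a two-pointed conic $(f\colon C\to X;t_1,t_2)$ precisely when $H^1(C,f^*T_X(-t_1-t_2))=0$, which through the normal-bundle sequence of $C$ in $\PP^\circ$ translates into surjectivity of an explicit map of spaces of sections; the degeneracy locus where this fails must be shown to have codimension at least $2$ in each fiber of the analogue of $q$, and this is where hypothesis $(5)$ enters decisively, since it controls $ch_2(T_X)$ (Lemma~2.2) and hence the splitting type of $f^*T_X$ governing the vanishing. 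The genuine new subtlety compared with Section~3 is that the domain now has degree $2$ and may be a nodal broken conic, so one must carry out the estimate both on the open locus of smooth conics and along the boundary, where $f^*T_X$ lives on a reducible curve; keeping the degeneracy locus of codimension $\geq 2$ uniformly across this boundary stratification is the step I expect to require the most care.
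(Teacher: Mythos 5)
Your overall reduction (smoothness via Theorem~2.7 plus connectedness implies irreducibility) matches the paper, but from there you take a genuinely different route, and the route as written has two gaps.

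First, the preliminary reduction to the canonical component $M_{2\alpha,2}$ is essentially circular. Ruling out multiple covers of lines through two general points only excludes components whose general member is such a cover; it does not show that all conics and broken conics through $(p_1,p_2)$ lie in a \emph{single} irreducible component of $\MM_{0,2}(X,2\alpha)$ --- that is more or less the statement you are trying to prove. If instead you apply Proposition~3.1 to the union of all dominating components, you need normality of that union and control of the non-smooth locus along component intersections, which brings you back to the same hard estimate.

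Second, and more seriously, the step you yourself flag as ``the hard part'' --- showing that the locus where $ev_2$ is not smooth has codimension at least $2$, uniformly across the boundary stratification of nodal conics --- is not carried out, and it is not a routine extension of Section~3. The degree-$1$ argument relies on putting the image curve in the normal form $x_2=\dots=x_n=0$ and reducing to a linear-algebra degeneracy locus; for degree $2$ the image is a possibly reducible conic, the normal form and the extension-of-polynomials step both break down, and the boundary analysis is genuinely delicate. The paper's proof of this Proposition is architected precisely to avoid this computation: it extends $X$ to a larger complete intersection $Y\subset\PP(e_0,\dots,e_n,1,\dots,1)$ (Proposition~2.3) with enough extra weight-$1$ variables that (i) Bend and Break forces the fiber $M_{p_1,p_2}(Y)$ to meet the boundary $\Delta_{1,1}(Y)$, and (ii) $\Delta_{1,1}(Y)\cap M_{p_1,p_2}(Y)$, realized as the preimage of the diagonal under the generically finite map $M_{p_1,\bullet}\times M_{\bullet,p_2}\to Y\times Y$ built from the \emph{degree-$1$} spaces of Section~3, is connected by Badescu's theorem. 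Connectedness of $M_{p_1,p_2}(Y)$ follows, and one descends to $X$ by cutting with the ample tangency divisors $\mathcal{T}_{Y\cap D_k,p_1}$. Unless you can actually supply the degree-$2$ codimension estimate (including at the boundary), your proposal is a plan rather than a proof, and I would recommend the extension-and-Badescu argument instead.
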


\begin{proof}

By Proposition \ref{extend}, we can extend $X$ to some $Y \subset  \PP(e_0,..,e_n,1,...,1)$ such that $X$ is contained in $Y$, $X$ equals $Y \cap D_1 \cap ... \cap D_r$, $D_k$ Cartier divisor of degree $1$ for every $k=1,...,r$. By adding enough variables, i.e., by taking $r$ big enough, we can choose $Y$ that satisfies the inequality $$dim(Y) \leq -K_Y \cdot 2\alpha -2.$$ In fact, $dim(Y)=dim(X)+r$ and $ -K_Y \cdot 2\alpha -2=  -K_X \cdot 2\alpha -2 +2r$. Similarly, we can also assume that $$2 \cdot (-K_Y \cdot \alpha -2) > n+r=dim(\PP(e_0,..,e_n,1,...,1)).$$ We will make use of the first inequality right away, and of the second one later on.

\

Let us consider the fiber $M_{p_1,p_2}(Y)$ of $ev_2(Y): \MM_{0,2}(Y,2\alpha) \to Y^2$ over a general point $(p_1,p_2) \in Y^2$. We first show that it is non-empty. For $i=1,2$, $ev_{p_i}: \MM_{0,p_i}(Y,\alpha) \to Y$ is dominant, and the general fiber is connected of dimension $-K_Y \cdot \alpha -2$. The curves parametrized by a general fiber of $ev_{p_i}$ sweep out a closed subscheme $\Pi_i$ of $Y$ of dimension $(-K_Y \cdot \alpha -2) +1=-K_Y \cdot \alpha -1$. For $i=1,2$, $dim(\Pi_i)=-K_Y \cdot \alpha -1$, so that $dim(\Pi_1) + dim(\Pi_2)=2(-K_Y \cdot \alpha -1) \geq dim(Y)$. Therefore, by Theorem \ref{cohomology}, $\Pi_1 \cap \Pi_2$ is nonempty, which means that there are two curves $C_1$ and $C_2$ of class $\alpha$ containing $p_1$ and $p_2$ respectively, that intersect at a point. Thus $C_1 \cup C_2$ has class $2\alpha$ and contains $p_1$ and $p_2$, which implies that $ev_2(Y)$ is dominant, and therefore surjective.

\

The inequality $dim(Y) \leq -K_Y \cdot 2\alpha -2$ also allows us to use Bend and Break (\cite[Corollary 5.6.2]{Kol}): every component of $M_{p_1,p_2}(Y)$ contains a reducible curve, and thus every component of $M_{p_1,p_2}(Y)$ intersects the boundary divisor $\Delta_{1,1}(Y) \subset \MM_{0,2}(Y,2)$. Thus to prove connectedness, it's enough to show that $\Delta_{1,1}(Y) \cap M_{p_1,p_2}(Y)$ is connected. To do this, we interpret $\Delta_{1,1}(Y) \cap M_{p_1,p_2}(Y)$ as the preimage of a diagonal and use B\u adescu's theorem (\cite{Bad}). 

\

Consider the evaluation map $e_2: \MM_{0,2}(Y,\alpha) \to Y \times Y$. In what follows, we denote the diagonal in $Y \times Y$ as $\Delta_Y$.
Define $M_{p_1,\bullet} :=e_2^{-1}(\{p_1\} \times Y)$, $M_{\bullet, p_2} :={e_2}^{-1}(Y \times \{p_2\})$. $M_{p_1,\bullet}$ is projective, smooth and connected. This follows from the fact that a general fiber of $ev_1: \MM_{0,1}(Y,\alpha) \to Y$ is smooth and connected, and uses the following commutative diagram:

\begin{center}
\begin{tikzcd}
\MM_{0,2}(Y,\alpha) \arrow{r}{e_2} \arrow{d}
&Y \times Y \arrow{d}{\pi_1}\\
\MM_{0,1}(Y,\alpha) \arrow{r}{ev_1} &Y
\end{tikzcd}
\end{center}

(Here the map on the lefthand side forgets the second marked point.)

Similarly, $M_{\bullet, p_2}$ is projective, smooth and connected.

\

We have maps $M_{p_1,\bullet} \to Y$ and $M_{\bullet, p_2} \to Y$, given by evaluation at the unspecified marked point $\bullet$. Their product defines a morphism $$e_{1,2}: M_{p_1,\bullet} \times M_{\bullet, p_2} \to Y \times Y.$$ Since through any two distinct points there are at most finitely many lines, $e_{1,2}$ is finite over its image. We also have $e_{1,2}^{-1}(\Delta_Y)=\Delta_{1,1}(Y) \cap M_{p_1,p_2}(Y)$.

Since $M_{p_1,\bullet}$ and $M_{\bullet, p_2}$ are irreducible, $M_{p_1,\bullet} \times M_{\bullet, p_2}$ is irreducible of dimension $dim(M_{p_1,\bullet}) + dim(M_{\bullet, p_2})=2(-K_X \cdot \alpha -2)$. Therefore $M_{p_1,\bullet} \times M_{\bullet, p_2}$ is $(dim(M_{p_1,\bullet} \times M_{\bullet, p_2})-1)-$connected, and by the second inequality at the beginning of the argument, $$dim(M_{p_1,\bullet} \times M_{\bullet, p_2}) > dim(\PP(e_0,..,e_n,1,...,1)).$$ Therefore B\u adescu's theorem (\cite{Bad}) implies that  $e_{1,2}^{-1}(\Delta_Y)=\Delta_{1,1}(Y) \cap M_{p_1,p_2}(Y)$ is connected, which finally gives that $M_{p_1,p_2}(Y)$ is connected.

\

Now, by Theorem \ref{smoothfibers}, a general fiber of $ev_2(Y)$ is smooth (of expected dimension). Therefore a general fiber of $ev_2(Y)$ is irreducible.

\

To conclude, consider the fiber $M_{p_1,p_2}(X)$ of $ev_2(X): \MM_{0,2}(X,2\alpha) \to X^2$ over a general point $(p_1,p_2) \in X \times X$. Let $M_{p_1,p_2}(Y)$ denote the fiber over $(p_1,p_2)$ of $ev_2(Y)$. We are going to show that $M_{p_1,p_2}(Y)$ is connected and normal.

Notice that $M_{p_1,p_2}(X)$ is cut out in $M_{p_1,p_2}(Y)$ by the ample divisors $\mathcal{T}_{Y \cap D_k,p_1}$, for $k=1,...,r$. Therefore $dim(M_{p_1,p_2}(Y)) \leq dim(M_{p_1,p_2}(X)) + r$. In particular, since $M_{p_1,p_2}(Y)$ is non-empty, $M_{p_1,p_2}(X)$ is also non-empty.

Since $(p_1,p_2) \in X \times X$ is general, we also have that $$dim(M_{p_1,p_2}(X)) + r= -K_X \cdot 2\alpha -1 + r=-K_Y \cdot 2\alpha -1 \leq$$ $$\leq dim(\MM_{0,2}(Y,2\alpha)) - 2 dim(Y) \leq dim(M_{p_1,p_2}(Y)).$$

Therefore the dimension of $M_{p_1,p_2}(Y)$ equals its expected dimension.

Since $(p_1,p_2) \in X \times X$ is general, by Theorem \ref{smoothfibers} we have that $M_{p_1,p_2}(X)$ is smooth. Since $M_{p_1,p_2}(X)$ is cut by ample divisors in $M_{p_1,p_2}(Y)$, it follows that $M_{p_1,p_2}(Y)$ is regular in codimension $1$.

Now consider an integral (say, rational) curve T in $Y \times Y$ connecting $(p_1,p_2)$ to a general point $(q_1,q_2) \in Y \times Y$. We proved above that the dimension of the fiber of $ev_2(Y)$ over $(p_1,p_2)$ equals its lower bound. Therefore there is an open dense subset $T^\circ \subset T$ containing $(p_1,p_2)$ such that the dimension of the fibers of $ev_{T} : ev_2(Y)^{-1}(T^\circ) \to T^\circ$ is the smallest possible. By \cite[II, 1.7.3]{Kol}, $ev_{T}$ is a flat, local complete intersection morphism. Since we showed in the first part of the proof that a general fiber of $ev_{T}$ is connected, all fibers of $ev_{T}$ are connected by the Principle of Connectedness (\cite[III, Exercise 11.4]{Har}). In particular, $M_{p_1,p_2}(Y)$ is connected. Since $M_{p_1,p_2}(Y)$ is a local complete intersection and regular in codimension $1$, it is also normal.

\

As already noticed above, $M_{p_1,p_2}(X)$ is cut out in $M_{p_1,p_2}(Y)$ by the ample divisors $\mathcal{T}_{Y \cap D_1,p_1}$, ..., $\mathcal{T}_{Y \cap D_r,p_1}$. Thus, by Lemma \ref{ESZ}, $M_{p_1,p_2}(X)$ is connected. By Theorem \ref{smoothfibers}, a general fiber of $ev_2(X)$ is smooth. Therefore a general fiber of $ev_2(X)$ is irreducible.

\end{proof}

\

We can finally prove uniruledness of a general fiber of $ev_2$, which we will use to construct $1$-twisting surfaces.

\begin{prop}
Assume that $\sum_{i=0}^n e_i^2 -\sum_{j=1}^c d_j^2 \geq 0$. Then a general fiber of $ev_2$ is uniruled by rational curves of $\lambda-$degree $1$.
\end{prop}

\begin{proof}
First of all, we observe that if $2(\sum e_i^2 -\sum d_j^2) + dim(X)+2K_X \cdot \alpha+4 > 0$, then a general fiber $M_X$ of $ev_2$ is uniruled by rational curves of $\lambda-$degree $1$. Namely, since $c_1(T_{M_X}) = (\sum e_i^2 -\sum d_j^2 + 2) \lambda$ and $\lambda$ is ample, $M_X$ is uniruled by curves of $(-K_{M_X})-$degree at most $dim(M_X)+1$. Let $\gamma$ be such a curve of minimal degree. If we had $\lambda \cdot \gamma \geq 2$, then $2(\sum e_i^2 -\sum d_j^2) + dim(X)+2K_X \cdot \alpha+4 > 0$ would imply that $-K_{M_X} \cdot \gamma > dim(M_X)+1$, a contradiction.

\

Assuming that the general fiber of $ev_2$ is uniruled by rational curves of $\lambda-$degree $1$, and that $D$ is such a curve, the dimension of the space of deformations of $D$ containing a general point is $-K_{M_X} \cdot D -2 =\sum e_i^2 -\sum d_j^2.$ The next step is to show that the inequality $-K_{M_X} \cdot D -2 =\sum e_i^2 -\sum d_j^2 \geq 0$ is actually sufficient to guarantee that the general fiber of $ev_2$ is uniruled by rational curves of $\lambda-$degree $1$.

\

By Proposition \ref{extend}, we can extend $X$ to some $Y \subset  \PP(e_0,..,e_n,1,...,1)$ such that $X \subset Y$, $X=Y \cap D_1 \cap ... \cap D_r$, $D_k$ Cartier divisor of degree $1$ for every $k=1,...,r$. By adding enough variables, we can choose $Y$ that satisfies the inequality $2(\sum e_i^2 +r -\sum d_j^2) + dim(Y)+2K_Y \cdot \alpha-4 > 0$.

\

Let $(p_1,p_2) \in X^2$ be a general point, and let $M_X, M_Y$ be the fibers over $(p_1,p_2)$ of $ev_2(X): \MM_{0,2}(X,2) \to X^2$ and $ev_2(Y): \MM_{0,2}(Y,2) \to Y^2$ respectively.

By \cite[2.10]{Deb}, the fiber of the $Y \times Y-$scheme $Mor(\PP^1\times Y \times Y, \MM_{0,2}(Y,2))$ over a point $(q_1,q_2) \in Y \times Y$ is $Mor(\PP^1, ev_2(Y)^{-1}(q_1,q_2))$. Therefore, by upper semicontinuity of the fiber dimension, the dimension of the space of rational curves in $ev_2(Y)^{-1}(q_1,q_2)$ of $\lambda-$degree $1$ containing a general point is at least $-K_Y \cdot D -2$ for \em any \em $(q_1,q_2) \in Y \times Y$. In particular, the dimension of the space of rational curves of $\lambda-$degree $1$ in $M_Y$ passing through a general point is at least $-K_Y \cdot D -2$.

Consider a general point $[C]$ of $M_X$ given by a stable map with image $C$. Then $C$ is smooth and very free, which means that $C \simeq \PP^1$ and that $h^1(C, T_X|_C(-2))=0$. The normal exact sequence of $X$ in $Y$ restricted to $C$ is: $$0 \to T_X|_C \to T_Y|_C \to \OO_X(1)^{\oplus c}|_{C} \to 0.$$ Since $\OO_X(1)|_C \simeq \OO_C(2)$, after twisting the short exact sequence by $\OO_C(-2)$ we get $$0 \to T_X|_C(-2) \to T_Y|_C(-2) \to \OO_C^{\oplus c} \to 0.$$ Since $h^1(C, T_X|_C(-2))=0$ and $h^1(C,\OO_C)=0$, we have $h^1(C, T_Y|_C(-2))=0$, which means that $[C]$ is also very free \em as a point of $M_Y$\em. Therefore the space $\mathscr{M}(M_Y,[C])$ of $\lambda-$degree $1$ curves in $M_Y$ passing through $[C]$ has dimension at least $-K_Y \cdot D -2=r -K_X \cdot D -2$.

Let $\mathscr{M}(M_Y,[C])^{norm}$ denote the normalization of $\mathscr{M}(M_Y,[C])$. By \cite[Theorem 3.4]{Keb}, there is a \enquote{tangency} morphism $\tau: \mathscr{M}(M_Y,[C])^{norm} \to \PP(T_{M_Y,[C]})$ which is finite. The morphism $\tau$ associates to a $\lambda-$degree $1$ curve in $M_Y$, smooth at $[C]$, its tangent direction at $[C]$. In the normalization $\mathscr{M}(M_Y,[C])^{norm}$, a point parametrizing a curve singular at $[C]$ is replaced by points parametrizing the normalization of such curve, and the tangency map gives the various tangency lines. Since $\tau$ is finite, its image $Im(\tau)$ is a projective scheme of dimension at least $-K_Y \cdot D -2=r -K_X \cdot D -2$. Since $M_X=M_Y \cap \mathcal{T}_{D_1,p_1} \cap ... \cap \mathcal{T}_{D_r,p_1}$, where $\mathcal{T}_{D_1,p_1}, ..., \mathcal{T}_{D_r,p_1}$ are the tangency divisors defined earlier in the section, we have that $\PP(T_{M_X,[C]}) \subset \PP(T_{M_Y,[C]})$ is a linear subspace of codimension $r$. Since, by hypothesis, $-K_X \cdot D -2 \geq 0$, we get that $Im(\tau)$ and $\PP(T_{M_X,[C]})$ have nonempty intersection. This means that there exists a $\lambda-$degree $1$ curve $\gamma$ in $M_Y$ through $[C]$ with a tangent line at $[C]$ that belongs to $T_{M_X,[C]}$, and therefore to every tangent space $T_{\mathcal{T}_{D_k,p_1},[C]}$ for $k=1,...,r$. Since $\mathcal{T}_{D_k,p_1}$ has class $\lambda$ for every $k=1,...,r$, the curve $\gamma$ must lie entirely in $\mathcal{T}_{D_k,p_1}$ for every $k=1,...,r$, i.e., $\gamma$ must lie in $M_X$.
\end{proof}

\section{Twisting surfaces and Rational Simple Connectedness}

We are now finally able to produce our twisting surface, following the strategy of \cite[Lemma 6.8]{dJS06a}. Recall the definition of a $1$-twisting surface (in a version sufficient for our purposes):

\begin{defin}
A \em $1$-twisting surface \em in a scheme $X$ is a surface $\Sigma \simeq \PP^1 \times \PP^1$ with first projection $\pi: \Sigma \to \PP^1$ and with a morphism $h: \Sigma \to X$ such that:

1) the morphism $(\pi, h): \Sigma \to \PP^1 \times X$ is finite;

1) the vertical normal bundle $N_{(\pi, h)}:=$Coker$(d(\pi, h): T_{\Sigma} \to (\pi, h)^*T_{\PP^1 \times X})$ is generated by global sections; and

2) $h^1(\Sigma, N_{(\pi,h)}(-1,-1))=0$.
\end{defin}

In the next Proposition we will construct a candidate $\Sigma$ for our twisting surface, a candidate which we will be able to conclude it is actually $1$-twisting by applying a modified version of \cite[Lemma 7.8]{dJS06a}.

\begin{prop}
\label{onetwisting}
Let $X$ be a smooth weighted complete intersection satisfying the \em main hypothesis\em. Then a general point of a general fiber of $ev_2: \MM_{0,2}(X, 2 \alpha) \to X^2$ parametrizes a divisor on a surface $\Sigma \simeq \PP^1 \times \PP^1$ with class $(1,1)$. Furthermore, there is a morphism $h: \Sigma \to X$ such that the morphism $(\pi, h): \Sigma \to \PP^1 \times X$ is finite.
\end{prop}

\begin{proof}

Let $D \subset M_{p_1,p_2}$ be a free rational $\lambda-$degree $1$ curve that meets the boundary divisor $\Delta_{1,1} \subset M_{p_1,p_2}$ transversally at two points. Consider the diagram:

\begin{center}
\begin{tikzcd}
\mathscr{C}_D \arrow[hook]{r} \arrow{d}{\pi |_D}
&\mathscr{C} \arrow{r}{g}\arrow{d}{\pi}
&X \subset \PP^\circ \subset \PP\\
D\arrow[hook]{r}& M_{p_1,p_2}
\end{tikzcd}
\end{center}
where $\mathscr{C}_D$ is the restriction of $\mathscr{C}$ over $D$. Then $\mathscr{C}_D$ is a smooth rational surface, fibered via $\pi$ over $D \simeq \PP^1$, with two sections $R_1$ and $R_2$. If $S_1$ and $S_2$ are the two sections of $\pi: \mathscr{C} \to M_{p_1,p_2}$, we have seen that $\pi_*(S_1)^2=\pi_*(S_2)^2=-\lambda$. Therefore $R_1$ and $R_2$ have self-intersection $-1$, and thus can be contracted by Castelnuovo's Theorem. Denote by $\phi: \mathscr{C}_D \to \Sigma$ the contraction of $R_1$ and $R_2$. 

\

The fibration $\pi: \mathscr{C}_D \to D$ has exactly two reducible fibers over, say, $m_1$ and $m_2$. In particular, $\pi^{-1}(m_i)=F_i \cup G_i$, $F_i \simeq \PP^1 \simeq G_i$, for $i=1,2$. Say also that $F_1, F_2$ intersect $R_1$ but not $R_2$, and that $G_1, G_2$ intersect $R_2$ but not $R_1$. Since $F_i$ and $G_i$, for fixed $i$, are irreducible components of connected rational fibers, we have that $F_i^2=G_i^2=-1$ for $i=1,2$. In particular, by Castelnuovo's Theorem, we can contract $F_1$ and $G_2$. Let $\phi': \mathscr{C}_D \to \Sigma'$ be such contraction. Then $\Sigma'$ is a $\PP^1-$bundle, and it is easy to check that $(\phi'(R_i))^2=0$ for $i=1,2$. Thus, by \cite[V, Theorem 2.7a]{Har}, we have that $\Sigma' \simeq \PP^1 \times \PP^1$. We can conclude that $\Sigma$ is a del Pezzo surface with $K_{\Sigma}^2= K_{\Sigma'}^2=8$ and at least four $(-1)-$curves. By the classification of del Pezzo surfaces (see for example \cite[III, 3.9]{Kol}), it follows that $\Sigma \simeq \PP^1 \times \PP^1$.

\

Now consider the two morphisms:

\[
\begin{tikzcd}
 & \Sigma \\
\mathscr{C}_D \arrow{ur}{\phi} \arrow{rr}{g|_{\mathscr{C}_D}} && X
\end{tikzcd}
\]
Since the fibers of $\phi$ are contracted by $g|_{\mathscr{C}_D}$, the latter map factors through the former by a rigidity result (see \cite[Lemma 1.15]{Deb}). Therefore we obtain a morphism $h: \Sigma \to X$. Furthermore, the morphism $(\pi, h): \Sigma \to \PP^1 \times X$ is finite by construction since, for $t \in \PP^1 \simeq D$, it reduces to the stable map $g_t: \mathscr{C}_t \to X$.

\end{proof}

Since $h$ is equal to $g|_{\mathscr{C}_D}$ on a dense open subscheme of $\Sigma$, and $\mathscr{C}_D$ is a family of free curves, we have that $h$ is unramified on a dense open subscheme of $\Sigma$. Therefore (see \cite[Definition 3.4.5]{Ser}) we have an injection $dh: T_{\Sigma} \to h^*T_X$, whose cokernel is denoted $N_h$. In this setting, $H^0(\Sigma, N_h)$ describes first order deformations of $h: \Sigma \to X$ keeping the target fixed, and $H^1(\Sigma, N_h)$ describes obstructions to infinitesimal deformations.

\

Since $\Sigma$ is abstractly isomorphic to $\PP^1 \times \PP^1$, we can deduce that $h: \Sigma \to X$ is $1-$twisting as in \cite[Lemma 7.8]{dJS06a}. The only difference is that in \cite{dJS06a} the morphism $h$ turns out to be an embedding, and one can deduce properties for $N_{(\pi, h)}$ by studying the normal bundle $N_{\Sigma/X}$. In our case, the normal bundle has to be replaced by the normal sheaf $N_h$ associated to $h$, but the result still applies.

\

The existence of a $1$-twisting surface was the last ingredient needed to deduce rational simple connectedness from the general machinery developed in \cite{dJS06a}. We now sketch this process for the reader's convenience.

\

$\bullet$ Since the general fiber of $ev_1: \MM_{0,1}(X,\alpha) \to X$ is connected by Theorem \ref{pointedlines}, by Lemma \ref{higherdegreecomponents} there are canonically defined irreducible components $M_{l \alpha,0}$ of $\MM_{0,0}(X,l \alpha)$ for every $l \geq 1$. A general point of $M_{l \alpha,0}$ parametrizes a smooth, free curve of degree $l$; furthermore, $M_{l \alpha,0}$ also parametrizes degree $l$ covers of smooth, free curves, as well as reducible curves whose components are smooth and free.

\

$\bullet$ For $l,m \geq 1$, we define $M_{l \alpha,m} \subset \MM_{0,m}(X,l \alpha)$ as the irreducible component whose image in $\MM_{0,0}(X,l \alpha)$ under the forgetful morphism is $M_{l \alpha,0}$. For $m=2$, these are the components that appear in Definition \ref{RSC}.

\

$\bullet$ Consider a surface $\Sigma \simeq \PP^1 \times \PP^1$ with first projection $\pi: \Sigma \simeq \PP^1$ and with a morphism $h: \Sigma \to X$. Then $h: \Sigma \to X$ is said to have $M$-class $(l_1,l_2)$ if the fibers of $\pi$ are parametrized by points of $M_{l_1 \alpha,0}$, and the sections of $\pi$ by a point of $M_{l_2 \alpha,0}$.

\

$\bullet$ In Proposition \ref{onetwisting}, we constructed a $1$-twisting surface with $M$-class $(1,1)$. By \cite[Section 7]{dJS06a}, there are $1$-twisting surfaces with $M$-class $(l_1,l_2)$ for every $l_1,l_2 \geq 1$.

\

$\bullet$ Finally, to show rational simple connectedness we need to show that the general fiber of $ev_2|_{M_{l\alpha,2}}: M_{l \alpha,2} \to X \times X$ is rationally connected for every $l \geq 2$. For $l=2$, we know that the general fiber is irreducible from Proposition \ref{irreducibleconics}, and that it is Fano (hence rationally connected) from Definition-Lemma \ref{lambdadef}. The result for $l > 2$ follows by induction, using the existence of $1$-twisting surfaces of $M$-class $(1,l)$, in a rather sophisticated way (see \cite[Proof of Theorem 1.7]{dJS06a} or \cite[Proof of Theorem 7.3, Step 1]{DeL}). We summarize the argument in the remaining part of the section.

\

The core difficulty is in proving the following Claim, which requires the existence of a $1$-twisting surface.

{\bf Claim:} A general point of a general fiber of $ev: M_{l\alpha,2} \to X \times X$ is contained in a rational curve intersecting the boundary $\Delta_{1,l-1}$ in a smooth point of the fiber.

\begin{proof}
Let $h: \Sigma \to X$ be a $1$-twisting surface of type $(1,l-1)$. The map $h$ induces a morphism $\MM_{0,2}(\Sigma,F'+F) \to \MM_{0,2}(X,l)$, where $F$ is the fiber of the first projection $\pi$ and $F'$ is the image of a section of $\pi$. Curves corresponding to divisors $D \in |\OO_\Sigma(F'+F)|$ with $2$ distinct smooth marked points are smooth points of $\MM_{0,2}(X,l)$. One can deform $F+F'$ to a smooth divisor $C$ in the linear system while keeping the marked points fixed. The two divisors $F+F'$ and $C$ span a pencil $\Lambda \simeq \PP^1$, which induces a morphism $\PP^1 \to M_{l\alpha,2}$ whose image is contained in the fiber of the evaluation map. One can deform the stable map $f_C$ corresponding to $C$ in $M_{l\alpha,2}$ to a general stable map $f'$ of the fiber. Since $\Sigma$ is $1$-twisting, $\Sigma$ can be deformed to a $1$-twisting surface $\Sigma'$ that contains a deformation $C'$ of $C$ whose induced stable map is $f'$.
\end{proof}

Using the Claim as a starting point, we can reduce the general statement to the following one: 

{\bf Reduction:} If the general fiber of $ev_\Delta: M_{\alpha,2} \times_{X} M_{(l-1)\alpha,2}$ is rationally connected, then the general fiber of $ev_2|_{M_{l\alpha,2}}: M_{l \alpha,2} \to X \times X$ is rationally connected.

The reduction is a consequence of the following more general Lemma (in our case, $V$ is the general fiber of $ev_\Delta$ and $W$ is the general fiber of $ev_2|_{M_{l\alpha,2}}$):

{\bf Lemma:} Let $V \subset W$ be projective varieties such that $V$ is connected and rationally connected and it intersects the smooth locus $W^{sm}$ of $W$. Assume further that for a general point $w \in W$, there is an irreducible rational curve in $W$ that intersects $V$ in a point that is a smooth point of $W$. Then $W$ is also rationally connected.

\begin{proof}
Let $\nu: \widetilde{W} \to W$ be a resolution of singularities of $W$, and let $\widetilde{V} \subset \widetilde{W}$ be the proper transform of $V$ in $\widetilde{W}$. By \cite[Theorem 5.4]{Kol}, there exists a rational map (the MRC fibration) $\phi: \widetilde{W} \dashedrightarrow Q$ to the MRC quotient $Q$. This means that there is a dense open $U \subset \widetilde{W}$ such that the restriction $\phi_U: U \to Q$ of $\phi$ is a proper morphism, a (non-empty) general fiber of $\phi_U$ is rationally connected, and a (non-empty) very general fiber of $\phi_U$ is a rationally connected component. By hypothesis, and since $\nu$ is an isomorphism on $W^{sm}$, a general point of $\widetilde{W}$ is contained in a rational curve that intersects $\widetilde{V}$. Since $\phi$ is the MRC fibration, this implies that $\widetilde{V}$ intersects the generic fiber of $\phi_U$, which means that $\widetilde{V} \cap U$ is non-empty and that $\phi_U(\widetilde{V} \cap U) \subset Q$ is dense. Since $V$ is connected and rationally connected, $\widetilde{V}$ is also connected and rationally connected, and since $\phi_U(\widetilde{V} \cap U) \subset Q$ is dense, $Q$ is connected and rationally connected too. Therefore $\widetilde{W}$ is rationally connected, and hence $W$ is rationally connected.
\end{proof}

\

Because of the reduction, we only need to show that the general fiber of $ev_\Delta: M_{\alpha,2} \times_{X} M_{(l-1)\alpha,2}$ is rationally connected. The sketch of the argument is the following, and it uses the \em induction hypothesis \em that the general fiber of $ev: M_{(l-1)\alpha,2} \to X \times X$ is rationally connected. For $(x_1,x_2) \in X \times X$ general point, let $F$ be the fiber over $x_1$ of the morphism $M_{\alpha,2} \to X$, which can be written equivalently as $M_{\alpha,2} \xrightarrow{ev_2} X\times X \xrightarrow{pr_1} X$ and as $M_{\alpha,2} \xrightarrow{\Phi_1} M_{\alpha,1} \xrightarrow{ev_1} X$ (where $\Phi_1$ forgets the first marked point). By looking at the first factorization, we can deduce that $ev_\Delta^{-1}(x_1,x_2)$ is mapped onto $F$ by the projection $M_{\alpha,2} \times_{X} M_{(l-1)\alpha,2} \xrightarrow{\pi_1} M_{\alpha,2}$. By looking at the second factorization, it also follows that $F$ is rationally connected. Since $F$ is rationally connected and the general fiber over $F$ of $\pi_1$ is rationally connected by induction (since it equals the general fiber of $ev_2:M_{(l-1)\alpha,2} \to X\times X$), by \cite{GHS} we have that $ev_\Delta^{-1}(x_1,x_2)$ is also rationally connected.

\bibliography{weak_app}

\end{document}